\newtheorem{theorem}{Theorem}
\newtheorem{lemma}[theorem]{Lemma}
\let\leq\leqslant
\let\geq\geqslant
\let\grminus\setminus
\let\emptyset\varnothing
\newcommand{\brac}[1]{{\left(#1\right)}}
\newcommand{\sbrac}[1]{{\left[#1\right]}}
\newcommand{\set}[1]{\left\{#1\right\}}
\newcommand{\norm}[1]{{\left|#1\right|}}
\newcommand{\Nat}{\mathbb{N}}
\newcommand{\skel}{\operatorname{skel}}
\newcommand{\ext}{\operatorname{ext}}
\newcommand{\inter}[1]{\operatorname{int}\sbrac{#1}}
\newcommand{\minn}{\operatorname{minn}}
\newcommand{\maxn}{\operatorname{maxn}}
\newcommand\ie{i.e\@ifnextchar.{}{.\@}}
\newcommand\etc{etc\@ifnextchar.{}{.\@}}
\newcommand\etal{et~al\@ifnextchar.{}{.\@}}
\newcounter{Cases}\setcounter{Cases}{0}
\newcounter{CasesSub}\setcounter{CasesSub}{0}
\newcounter{CasesSubSub}\setcounter{CasesSubSub}{0}
\newcounter{CasesAll}\setcounter{CasesAll}{0}
\newcommand\caselab[1]{\def\@currentlabel{#1}}
\newcommand{\caselabel}[1]{\refstepcounter{CasesAll}\caselab{#1}}
\newcommand{\Cases}{\setcounter{Cases}{0}\setcounter{CasesSub}{0}\setcounter{CasesSubSub}{0}}
\newcommand{\Case}[1]{\stepcounter{Cases}\setcounter{CasesSub}{0}\setcounter{CasesSubSub}{0}\medskip\noindent\textbf{Case~\arabic{Cases}.\,\,#1.\\\noindent}\caselabel{\arabic{Cases}}}
\newcommand{\Subcase}[1]{\stepcounter{CasesSub}\setcounter{CasesSubSub}{0}\smallskip\noindent\textbf{Case~\arabic{Cases}.\arabic{CasesSub}.\,\,#1.\\\noindent}\caselabel{\arabic{Cases}.\arabic{CasesSub}}}
\newcommand{\Subsubcase}[1]{\stepcounter{CasesSubSub}\noindent\textbf{Case~\arabic{Cases}.\arabic{CasesSub}.\arabic{CasesSubSub}.\,\,#1.\\\noindent}\caselabel{\arabic{Cases}.\arabic{CasesSub}.\arabic{CasesSubSub}}}
\title{Defective 3-Paintability of Planar Graphs}
\author[G.~Gutowski]{Grzegorz Gutowski}
\address[G.~Gutowski, T.~Krawczyk]{Theoretical Computer Science Department, Faculty of Mathematics and Computer Science, Jagiellonian University, Krak\'ow, Poland}
\email{\{gutowski,krawczyk\}@tcs.uj.edu.pl}
\author[M.~Han]{Ming Han}
\address[M.~Han]{School of Mathematical and Statistical Sciences, Arizona State University, Tempe, AZ 85287, USA}
\email{mhan31@asu.edu}
\author[T.~Krawczyk]{Tomasz Krawczyk}
\author[X.~Zhu]{Xuding Zhu}
\address[X.~Zhu]{Department of Mathematics, Zhejiang Normal University, China, and Department of Applied Mathematics, National Sun Yat-sen University, Taiwan}
\email{xdzhu@zjnu.edu.cn}
\thanks{
G. Gutowski is partially supported by Polish National Science Center UMO-2011/03/D/ST6/01370. 
T. Krawczyk is partially supported by Polish National Science Center UMO-2015/17/B/ST6/01873.
}
\begin{document}

\begin{abstract}
A $d$-defective $k$-painting game on a graph $G$ is played by two players: Lister and Painter.
Initially, each vertex is uncolored and has $k$ tokens.
In each round, Lister marks a chosen set $M$ of uncolored vertices and removes one token from each marked vertex.
In response, Painter colors vertices in a subset $X$ of $M$ which induce a subgraph $G\sbrac{X}$ of maximum degree at most $d$.
Lister wins the game if at the end of some round there is an uncolored vertex that has no more tokens left.
Otherwise, all vertices eventually get colored and Painter wins the game.
We say that $G$ is $d$-defective $k$-paintable if Painter has a winning strategy in this game.
In this paper we show that every planar graph is 3-defective 3-paintable and give a construction of a planar graph that is not 2-defective 3-paintable.
\end{abstract}

\maketitle

\section{Introduction}

All graphs considered in this paper are finite, undirected and contain no loops nor multiple edges.
For every $k \geq 1$, the set $\set{1,\ldots,k}$ is denoted $\sbrac{k}$.
The size of a graph $G$, denoted $\norm{G}$, is the number of vertices in $G$.
For a vertex $v$ of $G$, the set of vertices adjacent to $v$ in $G$ is denoted $N(v)$.
For a set $X$ of vertices of $G$, the graph induced by $X$ in $G$ is denoted $G[X]$.

A \emph{$d$-defective coloring} of a graph $G$ is a coloring of the vertices of $G$ such that each color class induces a subgraph of maximum degree at most $d$.
Thus, a 0-defective coloring of $G$ is simply a proper coloring of $G$.
The famous Four Color Theorem asserts that every planar graph is 0-defective 4-colorable.
Defective coloring of graphs was first studied by Cowen, Cowen and Woodall~\cite{CowenCW86}.
They proved that every outerplanar graph is 2-defective 2-colorable and that every planar graph is 2-defective 3-colorable.
They also showed an outerplanar graph that is not 1-defective 2-colorable,
a planar graph that is not 1-defective 3-colorable, and for every $d$, a planar graph that is not $d$-defective $2$-colorable.

A \emph{$k$-list assignment} of a graph $G$ is a mapping $L$ which assigns to each vertex $v$ of $G$ a set $L(v)$ of $k$ permissible colors.
A \emph{$d$-defective $L$-coloring} of $G$ is a $d$-defective coloring $c$ of $G$ with $c(v) \in L(v)$ for every vertex $v$ of $G$.
A graph $G$ is \emph{$d$-defective $k$-choosable} if for any $k$-list assignment $L$ of $G$, there exists a $d$-defective $L$-coloring of G.
The particular function that assigns the set $\sbrac{k}$ to each vertex of a graph is a $k$-list assignment.
Therefore, every $d$-defective $k$-choosable graph is $d$-defective $k$-colorable.
The converse is not true.
Voigt~\cite{Voigt93} gave a construction of a graph that is not $0$-defective $4$-choosable.
Eaton and Hull~\cite{EatonH99} and \v{S}krekovski~\cite{Skrekovski99} independently proved that every planar graph is 2-defective 3-choosable and every outerplanar graph is 2-defective 2-choosable.
They asked the question whether every planar graph is 1-defective 4-choosable.
One decade later, Cushing and Kierstead~\cite{CushingK10} answered this question in the affirmative.

This paper studies the on-line version of list coloring of graphs, defined through a two person game.
The study of on-line list coloring was initiated independently by Schauz~\cite{Schauz09} an Zhu~\cite{Zhu09}.

A \emph{$d$-defective $k$-painting game} on a graph $G$ is played by two players: Lister and Painter.
Initially, each vertex is uncolored and has $k$ tokens.
In each round, Lister marks a chosen set $M$ of uncolored vertices and removes one token from each marked vertex.
In response, Painter colors vertices in a subset $X$ of $M$ which induce a subgraph $G\sbrac{X}$ of maximum degree at most $d$.
Lister wins if at the end of some round there is an uncolored vertex with no more tokens left.
Otherwise, after some round, all vertices are colored and Painter wins the game.
We say that $G$ is \emph{$d$-defective $k$-paintable} if Painter has a winning strategy in this game.
For a vertex $v$ of $G$, let $\theta(v)$ denote the set of neighbors of $v$ that are colored in the same round as $v$.
Thus, in the $d$-defective painting game we have that for any vertex $v$, $\norm{\theta(v)} \leq d$.
We say that vertices in $\theta(v)$ give defect to $v$.

Let $L$ be a $k$-list assignment of $G$ with colors in the set $\sbrac{n}$.
Consider the following strategy for Lister.
In the $i$-th round, for $i \in \sbrac{n}$, Lister marks the set $M_i=\set{v: i \in L(v), v \notin X_1,\ldots,X_{i-1}}$, where $X_j$ is the set of vertices colored by Painter in the $j$-th round.
If Painter wins the game then the constructed coloring is a $d$-defective $L$-coloring of G.
Therefore, every $d$-defective $k$-paintable graph is $d$-defective $k$-choosable.
The converse is not true.
Zhu~\cite{Zhu09} showed a graph that is 0-defective 2-choosable and is not 0-defective 2-paintable.

Thomassen~\cite{Thomassen94} proved that every planar graph is 0-defective 5-choosable and Schauz~\cite{Schauz09} observed that every planar graph is also 0-defective 5-paintable.
As mentioned above, it is known that every planar graph is 2-defective 3-choosable~\cite{EatonH99,Skrekovski99} and 1-defective 4-choosable~\cite{CushingK10}.
Recently, Han and Zhu~\cite{HanZ16} proved that every planar graph is 2-defective 4-paintable.
It remained open questions whether or not every planar graph is 2-defective 3-paintable, or 1-defective 4-paintable.

In this paper, we construct a planar graph that is not 2-defective 3-paintable and prove that every planar graph is 3-defective 3-paintable.
The only remaining question is whether or not every planar graph is 1-defective 4-paintable.

In Section~\ref{sec:painter} we present a strategy for Painter that shows the following.
\begin{theorem}\label{thm:positive}
  Every planar graph is 3-defective 3-paintable.
\end{theorem}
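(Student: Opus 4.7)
The plan is to strengthen Theorem~\ref{thm:positive} to a statement about near-triangulations (plane graphs in which every internal face is a triangle) with prescribed, non-uniform token counts on the outer boundary, and then prove it by induction on $\norm{G}$, following the strategy used by Thomassen for $5$-choosability and adapted by Han and Zhu~\cite{HanZ16} for $2$-defective $4$-paintability. Specifically, I would formulate an invariant asserting that Painter wins on every near-triangulation whose outer cycle $C$ has two distinguished adjacent vertices $v_1, v_2$ that start with only $1$ token each (representing a ``partial precoloring''), whose remaining boundary vertices start with $2$ tokens, and whose interior vertices start with $3$ tokens, possibly subject to additional defect restrictions on boundary vertices.

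The induction would proceed round by round. At the start of each round, Painter observes Lister's marked set $M$ and must pick $X \subseteq M$ with $G\sbrac{X}$ of maximum degree at most $3$ so that the reduced graph $G-X$, together with its updated token counts, still satisfies the inductive hypothesis. The base case (e.g.\ $G$ being a single triangle) is handled directly. For the inductive step, the natural cases are: (i)~$C$ has a chord, in which case the chord splits $G$ along it into two smaller near-triangulations to which induction is applied sequentially; (ii)~$C$ is chordless, in which case the choice of $X$ is dictated by the structure of $M$ near $v_1$ and $v_2$, using the triangulated interior to find a small set of boundary vertices to color, thereby shrinking the outer cycle and relocating the distinguished segment.

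The main obstacle is pinning down the exact invariant and the corresponding case analysis so that the defect-$3$ budget is respected across arbitrarily many rounds. Since the game is online, a vertex may have tokens removed over many rounds, and the invariant must be strong enough to guarantee that whenever a formerly interior vertex becomes a boundary vertex (because some of its neighbors were colored), its remaining token count already suffices for its new role and the defect it may have already accumulated still leaves room for future boundary-type defects. A careful enumeration of how marked boundary vertices adjacent to $v_1$ or $v_2$ can be colored -- and which interior ``almost-chord'' paths may need to be used to separate the graph -- will be required. The slack provided by the defect bound of $3$, one more than in the $2$-defective setting, is what should make all configurations of $M$ manageable, and is plausibly also the reason why this approach would not extend to prove $2$-defective $3$-paintability (consistent with the construction promised elsewhere in the paper).
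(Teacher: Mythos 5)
Your high-level plan --- strengthen the statement to a Thomassen-style inductive claim about plane graphs with non-uniform token counts on the boundary --- is indeed the road the paper takes (Lemma~\ref{lem:induction}), but what you have written is an outline whose hard content is exactly the part you defer: ``pinning down the exact invariant and the corresponding case analysis.'' Two specific ingredients of the actual invariant are missing and are not things one would stumble into by routine case analysis. First, the paper attaches a \emph{value} to each token, and the value of the token Lister removes in a given round bounds the defect that vertex may receive in that round: a regular boundary vertex gets one token of value $2$ and one of value $3$ (not just ``two tokens''), an $(A,b)$-cut vertex gets a single token of value $2$, and the special vertices get a single token of value $1$. Without this per-round coupling between tokens and permitted defect, the accounting of how a vertex's total defect stays at most $3$ across the decomposition does not close; your phrase ``possibly subject to additional defect restrictions'' is precisely where the proof lives. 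Second, the special vertices are not two adjacent $1$-token vertices as in Thomassen's scheme but a set $A$ of at most two consecutive boundary vertices \emph{plus} a third vertex $b$, with the extra rules that Lister may not mark both vertices of $A$ in one round and that an $a\in A$ adjacent to $b$ on the boundary may only receive defect from $b$; several whole cases of the induction exist solely to restore these conditions.

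There is also a structural mismatch in how you run the induction. You propose a round-by-round argument: Painter picks $X$ so that $G-X$ with decremented tokens again satisfies the invariant. The paper instead performs a \emph{structural} induction: the graph is decomposed once (according to bridges, cut-points, chords, and finally a path-peeling step) into overlapping subgames, and Painter plays all the sub-strategies concurrently for the entire game, with a ``give away a token'' bookkeeping device to pay for vertices that appear in several pieces. The round-by-round formulation faces a real obstacle you do not address: defect is incurred only in the round a vertex is colored, and it is incurred simultaneously by all vertices of $X$, so the invariant on $G-X$ cannot record the relevant information; what must be controlled is the interaction between the token Lister just spent and the neighbors colored in that same round, across all pieces of the decomposition at once. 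As it stands, the proposal identifies the correct genre of proof but contains neither the invariant nor the argument, so it cannot be credited as a proof of the theorem.
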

In Section~\ref{sec:lister} we show that this result is best possible as we construct a graph and a strategy for Lister that shows the following.
\begin{theorem}\label{thm:negative}
  Some planar graphs are not 2-defective 3-paintable.
\end{theorem}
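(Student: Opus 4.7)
The plan is to construct the planar graph and Lister's winning strategy together in a modular, layered fashion. The underlying idea is that although the defect-$2$ rule gives Painter considerable freedom---any marked set inducing a subgraph of maximum degree at most $2$ can be colored in its entirety---only three tokens per vertex means Lister can afford at most two probes of a vertex before the third mark forces Painter either to color it or to lose the game. Lister's strategy will exploit this scarcity by accumulating constraints on a small ``core'' of vertices until no admissible coloring of the core remains available.

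The first step is to design a small planar \emph{forcing gadget} with a few designated \emph{interface} vertices and some private auxiliary vertices. The gadget should have the property that, starting from a prescribed token configuration on the interface, Lister has a play that either wins within the gadget or forces Painter to spend tokens on the interface vertices in a tightly controlled pattern. A natural building block is a cluster of triangles sharing an edge on the interface: if Lister marks the gadget aggressively, any defect-$2$ response by Painter either leaves interface tokens reduced in the desired way, or burns enough auxiliary tokens that a subsequent round can corner them. The second step is to compose many copies of the gadget planarly around a central subgraph---an apex vertex, an edge, or a small clique---so that the reductions produced by each gadget accumulate on the core. With enough copies (still a constant number, so that the whole graph remains planar), Lister can drive the core to a state where all three of its tokens have been removed while every defect-$2$ response simultaneously forbids Painter from ever coloring it.

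The hard part will be proving that the gadget genuinely restricts Painter despite the flexibility of defect $2$: Painter may color arbitrary induced paths and cycles in a single round, so naive gadgets collapse when Painter simply colors all marked vertices at once. The gadget must therefore be engineered so that \textbf{every} legal Painter response---including those in which Painter sacrifices interior vertices by repeatedly declining to color them---still produces the required token loss on the interface vertices. Formalizing this requires a case analysis over Painter's responses within each gadget, together with a careful global bookkeeping showing that the damage really does add up on the core while planarity is preserved throughout; this bookkeeping, rather than any single inequality, is where the weight of the proof will lie.
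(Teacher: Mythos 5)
There is a genuine gap: your text is an architectural sketch, not a proof. You correctly guess the overall shape of the construction --- an apex (``core'') vertex attached planarly to many copies of a gadget, with Lister's marks accumulating token losses --- but every load-bearing component is left unspecified. No gadget is exhibited, no Lister strategy is written down, and you explicitly defer the central difficulty (``every legal Painter response \ldots still produces the required token loss'') to future bookkeeping. Since the whole content of Theorem~\ref{thm:negative} lives in exactly that bookkeeping, nothing has been proved. One concrete point where the sketch is already misaligned: the reason the paper needs \emph{nine} copies is a pigeonhole forced by the defect bound on the apex --- Lister marks the apex $v$ together with three skeletons per round for three rounds; whichever round Painter colors $v$ in, $v$ may receive at most two defects, so at least one of the three skeletons marked that round is left entirely uncolored with exactly two tokens. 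Your plan of ``enough copies so the damage adds up on the core'' has the logic backwards: the copies are not there to damage the core, the core is there to guarantee that some copy survives intact with reduced tokens.

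The second, more important idea you are missing is the paper's exit from the online setting. After Lister's four opening rounds, the surviving subgraph $H'$ (a supergraph of an \emph{edge extension} of the daisy $D(4,362)$) has exactly two tokens on every vertex, so the residual game is a $2$-defective $2$-painting game; since paintability implies choosability, it then suffices to show $H'$ is not $2$-defective $2$-\emph{choosable}, which is an offline statement proved by exhibiting an explicit list assignment and a short counting argument over good and bad vertices (Lemma~\ref{lem:outerplanar-not-2-defective-2-choosable}). Your plan instead proposes to carry the adaptive case analysis of Painter's responses all the way to the end, which is precisely the part you admit you cannot yet control. Without either a concrete gadget with a verified forcing property or the reduction to an offline non-choosability statement, the proposal does not establish the theorem.
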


\section{Painter's strategy}\label{sec:painter}

In this section we prove Theorem~\ref{thm:positive}.
The proof provides an explicit, recursive strategy for Painter in a 3-defective 3-painting game on any planar graph.
Our proof can be easily transformed into a polynomial-time algorithm that plays the game against Lister.

Let $G$ be a connected non-empty plane graph.
By a plane graph we mean a graph with a fixed planar drawing.
Let $C$ be the boundary walk of the outer face of $G$.
For a vertex $v$ in $C$, we define the set of \emph{$C$-neighbors} of $v$ to be the set of vertices that are consecutive neighbours of $v$ in $C$. 
Observe that there may be more than two $C$-neighbors for a single vertex as $C$ is not necessarily a simple walk.
For the purpose of induction, we consider a more general game.
We augment the $3$-defective $3$-painting game and introduce a \emph{$(G,A,b)$-refined game} in which:
\begin{itemize}
  \item $A \cup \set{b}$ are \emph{special} vertices --
    $A$ is a set, possibly empty, of at most two vertices that appear consecutively in $C$;
    $b$ is a vertex in $C$ other than the vertices in $A$.
    There are additional conditions on marking and coloring of special vertices.
  \item each token has a \emph{value} -- when Lister removes a token of value $p$ from a marked vertex $v$ and Painter colors $v$ then at most $p$ neighbors of $v$ are colored in the same round.
    The initial number of tokens of different values will differ from one vertex to another.
\end{itemize}

We say that a vertex $v$ is an \emph{$(A,b)$-cut} if $v \notin A, v \neq b$ and there is a vertex $a$ in $A$ such that $v$ is on every path between $a$ and $b$ in $G$.
We call a vertex in $C$ that is neither in $A$, nor $b$, nor an $(A,b)$-cut to be a \emph{regular boundary} vertex.

Let \emph{token function} $f: V(G) \times \set{0,\ldots,3} \to \Nat$ be a mapping defined for each vertex $v$ and each value between $0$ and $3$.
Initially, each vertex $v$ has $f(v, p)$ tokens of value $p$.
We denote the vector $\brac{f(v,0), \ldots, f(v,3)}$ as $f(v)$.
We set values of $f$ so that:
\begin{itemize}
  \item $f(v) = (0,1,0,0)$ if $v \in A$, or $v = b$,
  \item $f(v) = (0,0,1,0)$ if $v$ is an $(A,b)$-cut,
  \item $f(v) = (0,0,1,1)$ if $v$ is a regular boundary vertex,
  \item $f(v) = (0,0,0,3)$ if $v \notin C$.
\end{itemize}
See Figure~\ref{fig:refined} for an example of a graph and a token function.

In each round, Lister marks a chosen set $M$ of uncolored vertices and removes one token from each marked vertex.
If $\norm{A} = 2$, then Lister is not allowed to mark simultaneously both vertices in $A$, \ie{} $\norm{M \cap A} \leq 1$.
Let $p_v$ denote the value of the token removed by Lister from a vertex $v$ in $M$.
In response, Painter colors vertices in a subset $X$ of $M$ such that the degree of any vertex $v$ in the induced subgraph $G\sbrac{X}$ is at most $p_v$, \ie{} $\forall v \in X:\norm{\theta(v)} \leq p_v$.
Additionally, if $a \in A$, and $\set{a,b}$ is an edge of $C$, then no neighbor of $a$ other than $b$ is colored in the same round as $a$, \ie{} $\theta(a) \subseteq \set{b}$.
Lister wins if at the end of some round there is an uncolored vertex with no more tokens left.
Otherwise, after some round, all vertices are colored and Painter wins.

\begin{figure}[h]

\begin{tikzpicture}[scale=0.9, yscale=0.85]

\tikzset{->-/.style={decoration={
  markings,
  mark=at position #1 with {\arrow{latex}}},postaction={decorate}}}

\coordinate (b) at (-5,0) {};
\coordinate (bc1) at (-2.5,1.5) {};
\coordinate (c1b) at (-2.5,-1.5) {};
\coordinate (a1) at (4.5,0.5) {};
\coordinate (ba1) at (2.5,1.5) {};
\coordinate (a2) at (4.5,-0.5) {};
\coordinate (c1a1) at (2.5,1.5) {};
\coordinate (c2) at (2.5,-1.5) {};
\coordinate (n1) at (4.5,-2.5) {};
\coordinate (n2) at (4.5,-3.5) {};
\coordinate (low) at (2.5,-4.5) {};
\coordinate (left) at (0,-3) {};

\coordinate (v) at (3.5,-3) {};
\coordinate (v1) at (3.5,-3.75) {};
\coordinate (v2) at (2.5,-3.5) {};
\coordinate (v3) at (2.5,-2.5) {};
\coordinate (v4) at (3.5,-2.25) {};

\coordinate (c1) at (0,0) {};

\filldraw[fill=gray!15, draw=none]
(b) to[out=75,in=182] (bc1) to[out=-2,in=105] (c1) to[out=-105,in=2] (c1b) to[out=178,in=-75] cycle;

\filldraw[fill=gray!15, draw=none]
(c1) to[out=75,in=182] (c1a1) to[out=-2,in=135] (a1) to (a2) to[out=-135,in=2] (c2) to[out=178,in=-75] cycle;

\filldraw[fill=gray!15, draw=none]
(left) to[out=88,in=182] (c2) to[out=-2,in=135] (n1) -- (n2) to[out=-135,in=2] (low) to[out=178,in=-88] (left);

\draw[thick] (b) to[out=75,in=182] (bc1) to[out=-2,in=105] (c1);
\draw[thick] (c1) to[out=-105,in=2] (c1b) to[out=178,in=-75] (b);
\draw[thick] (c1) to[out=75,in=182] (c1a1) to[out=-2,in=135] (a1);
\draw[thick] (a1) -- (a2);
\draw[thick] (a2) to[out=-135,in=2] (c2) to[out=178,in=-75] (c1);
\draw[thick] (left) to[out=88,in=182] (c2) to[out=-2,in=135] (n1);
\draw[thick] (n1) -- (n2);
\draw[thick] (n2) to[out=-135,in=2] (low) to[out=178,in=-88] (left);

\draw (n2) -- (v1);
\draw (v1) -- (v2);
\draw (v2) -- (v3);
\draw (v3) -- (v4);
\draw (v4) -- (n1);

\draw (v) -- (n1);
\draw (v) -- (n2);
\draw (v) -- (v1);
\draw (v) -- (v2);
\draw (v) -- (v3);
\draw (v) -- (v4);

\tikzstyle{every node}=[circle,minimum size=5pt,inner sep=0pt,draw,fill]
\begin{tiny}
\node (bn) at (b) {};
\node (a1n) at (a1) {};
\node (a1n) at (a1) {};
\node (a2n) at (a2) {};
\node (c1a1n) at (c1a1) {};
\node (c2n) at (c2) {};
\node (c1n) at (c1) {};

\node (n1n) at (n1) {};
\node (vn) at (v) {};
\node (v1n) at (v1) {};
\node (v2n) at (v2) {};
\node (v3n) at (v3) {};
\node (v4n) at (v4) {};

\node (n2n) at (n2) {};

\end{tiny}

\begin{tiny}
\tikzstyle{every node}=[inner sep=0pt]
\node at ($(c1) + (0.3,-0.05)$) {$c_1$};
\node at ($(c2) + (0,0.25)$) {$c_2$};
\node at ($(v) + (-0.15,0.23)$) {$v$};
\end{tiny}

\tikzstyle{every node}=[inner sep=1pt]
\node (GA1) at ($(a1) + (-0.8,-0.0)$) {$A$};
\node (GA2) at ($(a2) + (-0.8,0.0)$) {$A$};
\node (Gb) at ($(b) + (0.8, 0)$) {$b$};
\draw[->-=1] (GA1) -- (a1n);
\draw[->-=1] (GA2) -- (a2n);
\draw[->-=1] (Gb) -- (bn);

\begin{tiny}
\node (c1f) at ($(c1) + (0,1.3)$) {$(0,0,1,0)$};
\draw[->-=1] (c1f) -- ($(c1n)+(0,0.3)$);

\node (c1a1f) at ($(c1a1) + (0,0.8)$) {$(0,0,1,1)$};
\draw[->-=1] (c1a1f) -- ($(c1a1n)+(0,0.2)$);

\node (c2f) at ($(c2) + (2,0)$) {$(0,0,1,1)$};
\draw[->-=1] (c2f) -- ($(c2n)+(0.7,0)$);

\node (vf) at ($(v) + (2,0)$) {$(0,0,0,3)$};
\draw[->-=1] (vf) -- ($(vn)+(0.3,0)$);

\node (bf) at ($(b) + (-1.4,0)$) {$(0,1,0,0)$};
\draw[->-=1] (bf) -- ($(bn)+(-0.2,0)$);

\node (a1f) at ($(a1) + (1.4,0)$) {$(0,1,0,0)$};
\draw[->-=1] (a1f) -- ($(a1n)+(0.2,0.0)$);

\node (a2f) at ($(a2) + (1.4,0)$) {$(0,1,0,0)$};
\draw[->-=1] (a2f) -- ($(a2n)+(0.2,0.0)$);
\end{tiny}

\end{tikzpicture}
\caption{
An example of $(G,A,b)$-refined game.
Vertex $c_1$ is an $(A,b)$-cut.
Vertex $c_2$ is a regular boundary vertex ($c_2$ is a cut in $G$, but not an $(A,b)$-cut).
Since for every $a \in A$, $\set{a,b}$ is not an edge of $C$, each vertex in $A$ can get one defect from any of its neighbours outside $A$ (vertices of $A$ are not marked simultaneously).
}
\label{fig:refined}
\end{figure}

\begin{lemma}\label{lem:induction}
  Painter has a winning strategy in the $(G,A,b)$-refined game.
\end{lemma}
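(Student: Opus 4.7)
I would prove Lemma~\ref{lem:induction} by induction on $\norm{G}$. The base case $\norm{G} = 1$ is immediate: $A = \emptyset$, $b$ is the only vertex with $f(b) = (0,1,0,0)$, and when Lister marks $b$, Painter colors it and wins. For the inductive step, given Lister's first move $M$ with removed token values $p_v$ for $v \in M$, Painter selects a response $X \subseteq M$ satisfying (i) the defect bound $\norm{\theta(v)} \leq p_v$ for every $v \in X$, (ii) the extra special-vertex constraint $\theta(a) \subseteq \set{b}$ whenever $a \in A \cap X$ and $\set{a,b} \in E(C)$, and (iii) the property that every connected component $G'$ of $G - X$ admits a choice of new special vertices $A', b'$ on the outer walk $C'$ of $G'$ so that the residual token function on $G'$ dominates the token function required for the $(G', A', b')$-refined game. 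The inductive hypothesis applied to each such component then yields a winning continuation for Painter.

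The case analysis splits according to how $M$ meets the distinguished vertices. If $b \in M$, then $b$ has just lost its only token and Painter must include $b$ in $X$; the constraint $p_b = 1$ allows at most one of $b$'s neighbors to join $X$, which Painter takes to be a carefully chosen $C$-neighbor of $b$, so that the residual graph decomposes along this choice into smaller plane pieces in each of which a boundary vertex adjacent to $X$ serves as the new $b'$. If $b \notin M$, Painter operates on the arcs of $M \cap C$: a guiding principle is that a regular boundary vertex $v \in M$ from which Lister removed the value-$2$ token should be preferred for inclusion in $X$, because leaving it uncolored yields residual tokens $(0,0,0,1)$, which matches no role available in the refined game. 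A symmetric preference applies to vertices of $A \cap M$ insofar as their defect budgets permit, and to $(A,b)$-cut vertices of $M$.

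The most delicate step is verifying the token domination for vertices of $G'$ that change role upon removal of $X$. A regular boundary vertex $v \notin X$ that becomes an $(A',b')$-cut has residual tokens $(0,0,1,1)$ (if $v \notin M$) or $(0,0,1,0)$ (if $v \in M$ with the value-$3$ token removed, as ensured by the above preference), both of which dominate the required $(0,0,1,0)$. An interior vertex $u$ with $f(u) = (0,0,0,3)$ that becomes a boundary or cut vertex of $G'$ needs tokens $(0,0,1,1)$ or $(0,0,1,0)$; such a $u$ must be a neighbor of some $w \in X$, and Painter's construction is arranged so that $u \in M$ with its removed token of value $3$, so that the residual $(0,0,0,2)$ can be read as dominating $(0,0,1,1)$ by reinterpreting one value-$3$ token as a value-$2$ token. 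The main obstacle in writing out the full argument is designing $X$ as a clean combinatorial object along $C$, typically an arc or a small family of arcs whose coloring simultaneously respects every defect bound in $X$ and guarantees that every role transition inside $G - X$ yields one of the dominations above; this is handled by a careful subcase analysis on the cyclic structure of $M \cap C$ together with the positions of $A$, $b$, and any preexisting cut vertices.
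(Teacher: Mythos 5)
Your induction is set up in a fundamentally different way from the paper's, and the way you set it up has a gap that I believe is fatal rather than cosmetic. You induct on $\norm{G}$ by playing one round, choosing a response $X$ to Lister's first move $M$, and then requiring that the \emph{residual} token function on the components of $G-X$ dominate the token function of a new refined game. The problem is that this invariant cannot be maintained against an adversarial Lister. Concretely, let Lister mark an interior vertex $w$ together with all of its (say seven) interior neighbours, removing value-$3$ tokens. No admissible $X$ can contain all of $M$ (the centre would receive seven defects), so some marked interior vertex $u$ remains uncolored with residual tokens $(0,0,0,2)$; and deleting the colored vertices of $X$ only creates new \emph{internal} faces, so $u$ is still an interior vertex of $G-X$ and your scheme requires it to have $(0,0,0,3)$. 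Your third paragraph only treats marked-but-uncolored vertices that \emph{change role} (interior becoming boundary, boundary becoming cut); the vertices that keep their role after losing a token are exactly the ones the induction cannot absorb. A similar failure occurs on the boundary: Lister can remove value-$2$ tokens from many regular boundary vertices in one round, and your ``preference for coloring them'' cannot always be honoured simultaneously with the defect bounds, while the leftovers, holding $(0,0,0,1)$, can only be salvaged as $A'$ or $b'$, of which each component admits at most three. This is not a presentational issue you can defer to ``a careful subcase analysis on the cyclic structure of $M\cap C$''; it is the reason a round-by-round recursion does not close for this token function.

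The paper's proof avoids this entirely by never recursing on the position after a round. Instead it takes a minimal counterexample $G$ and performs a purely \emph{structural} decomposition fixed before play begins (cases on bridges, cut-points, the outer cycle being a triangle, chords incident to the special vertices, and a final fan-like decomposition along $C[a_2,b]$), producing subgames $\Gamma_1,\dots,\Gamma_k$ whose winning strategies are then run \emph{in parallel for the entire game}: in each round Lister's mark $M$ is translated into marks $M_i$ for every subgame, and the token bookkeeping is handled once and for all by the ``giving away a token'' and ``blocking'' devices together with devaluation. That composed-strategy mechanism is precisely what replaces the residual-token-domination invariant you are trying to maintain, and it is the missing idea in your proposal. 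If you want to keep a one-round-at-a-time induction, you would have to strengthen the game (change the token function or the rules) so that the class of positions is closed under playing a round, which is a different and, as far as the paper shows, unnecessary route.
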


Before the proof, we show how to use Lemma~\ref{lem:induction} to prove Theorem~\ref{thm:positive}.

\begin{proof}[Proof of Theorem~\ref{thm:positive}]
  Suppose to the contrary that a planar graph $G$ is not 3-defective 3-paintable.
  Adding some edges to $G$ introduce additional constraints for Painter in the 3-defective 3-painting game.
  Thus, we can assume that $G$ is connected.
  Choose any plane embedding of $G$.
  Choose any $b$ on the boundary of the outer face.
  By Lemma~\ref{lem:induction}, Painter has a winning strategy $S$ in the $(G,\emptyset,b)$-refined game.
  The strategy $S$ is a valid winning strategy in the 3-defective 3-painting game on $G$.
\end{proof}

Before we present the proof of Lemma~\ref{lem:induction}, we briefly introduce some techniques that we frequently use in the proof.

Assume that Painter has a winning strategy $S_1$ in the $(G_1,A_1,b_1)$-refined game $\Gamma_1$.
Now, if we modify the initial state of the game by adding some more tokens, or increasing value of some tokens, then obviously Painter has a winning strategy in the resulting game.
Thus, in the proof of Lemma~\ref{lem:induction} when some vertex has too many tokens, or has tokens of too great value, we can \emph{devalue} the token function and use the winning strategy $S_1$.
We say that a token function $g$ is \emph{sufficient} for $\Gamma_1$ if it is equal to or can be devalued to the token function in $\Gamma_1$.

In order to find a winning strategy for Painter in the $(G,A,b)$-refined game $\Gamma$, we often divide the graph $G$ into $k$, possibly overlapping, parts $G_1 = G\sbrac{V_1},\ldots,G_k = G\sbrac{V_k}$ and consider $(G_i,A_i,b_i)$-refined games.
The division of the graph and choice of special vertices $A_1,b_1,\ldots,A_k,b_k$ depends on the structure of $G$.
Then, we can use induction and assume that Painter has a winning strategy $S_i$ in each $(G_i,A_i,b_i)$-refined game $\Gamma_i$.
We present the following \emph{composed} strategy $S$ in $\Gamma$ that uses strategies $S_1,\ldots,S_k$ sequentially.


For a vertex $v$, let $i_v$ be the first index $i$ such that $v \in V_i$.
Strategy $S$ will use strategy $S_{i_v}$ to decide whether $v$ gets colored.
If $v \in V_j$ for some $j > i_v$ then we will have that $v = b_j$ or $v \in A_j$.
This way we get that vertex $v$ has only one token and will be marked only once in game $\Gamma_j$ -- in the round $v$ gets colored in $S_{i_v}$.

Now, we introduce a very useful technique. 
For a vertex $v$, let \emph{slack} of $v$ be the highest number $s$ such that we can remove $s$ most valuable tokens from $v$ and the resulting token function is sufficient for $\Gamma_{i_v}$.
The slack of any vertex is at most $2$.
Now, let $U(v)$ be some carefully selected set of neighbors of $v$ in $G$.
We say that $v$ \emph{gives away a token} to each $u$ in $U(v)$ to describe the following behavior.
Assume that the size of $U(v)$ does not exceed the slack of $v$, and, for each $u$ in $U(v)$, either $u$ has only one token in $\Gamma$ or $i_u < i_v$.
In particular, in every round, when we use strategy $S_{i_v}$ to decide whether or not to color $v$, we already know if any vertex in $U(v)$ will be colored in this round.
We say that $v$ is \emph{blocked} in some round by $u \in U(v)$ if $u$ is colored in this round, \ie{} either $u$ has only one token and $u$ is marked, or $i_u < i_v$ and $S_{i_u}$ colored $u$.
When vertex $v$ is blocked in some round then we will not mark it in $\Gamma_{i_v}$.
So, vertex $v$ will be marked in game $\Gamma_{i_v}$ possibly fewer times than it is marked in game $\Gamma$.
Each vertex $u \in U(v)$ blocks $v$ at most once during the game, and the number of times vertex $v$ is blocked will not exceed the slack of $v$.


Let $M$ be a set of vertices marked by Lister in some round.
For $i=1,\ldots,k$, Painter constructs the set $M_i$ and uses strategy $S_i$ to find a response $X_i$ for move $M_i$ in the game $\Gamma_i$.
The set $M_i$ depends on the responses given by strategies $S_1,\ldots,S_{i-1}$ and is defined as
  $$M_i=\set{v \in M \cap V_i: v \text{ is not blocked, } i = i_v \text{ or } i > i_v \text{ and } v \in X_{i_v}}\text{.}$$
Additionally, we need to decide the value of the token removed from each marked vertex.
Observe that the regular boundary vertices are the only vertices that have tokens of distinct values, \ie{} one token of value $2$ and one token of value $3$.
Usually, this will be a natural and simple decision.
In many cases, we will simply use the same value as Lister chose in $\Gamma$.
Nevertheless, in some scenarios, we will have to be more careful about this choice.
Details will be presented when needed.

Strategy $S$ colors the set $X = \set{v \in M: v \in X_{i_v}}$.
In order to prove that the composed strategy $S$ is a winning strategy in the game $\Gamma$ we need to argue that:
\begin{itemize}
  \item The token function in $\Gamma$ after removal of tokens that were given away is sufficient for each $\Gamma_i$.
    This is an easy calculation and we will omit it in most of the cases.
  \item The defects that any single vertex receives in games $\Gamma_1,\ldots,\Gamma_k$ do not exceed the value of a token removed by Lister in game $\Gamma$.
    This will usually be the most important argument.
  \item If $a \in A$ is a $C$-neighbor of $b$, then $\theta(a) \subseteq \set{b}$.
  \item In each round $\norm{M_i \cap A_i} \leq 1$.
In order to guarantee this, we will have that if some $A_i$ has two elements, then either $A_i = A$, or one of the vertices in $A_i$ gave away a token to the other.
Observe that if some vertex $v$ and $u \in U(v)$ are vertices in $G_i$, then they are never both marked in the same round in the game $\Gamma_i$.
\end{itemize}

In figures that present game divisions we use the following schemas:
\begin{itemize}
  \item Vertices of $G_1,\ldots,G_k$ lie inside or on the boundary of regions filled with different shades of gray.
  \item We denote $A_i$, and $b_i$ with $A$ and $b$ inside the region corresponding to $G_i$.
  \item We draw an edge directed from $v$ to $u$ to mark that $v$ gives away a token to $u$.
\end{itemize}

\begin{proof}[Proof of Lemma~\ref{lem:induction}]
  We prove the lemma by induction.
  Assume, that $G$ is the smallest, in terms of the number of vertices, connected plane graph for which the lemma does not hold.
  Assume, that all internal faces of $G$ are triangulated, as adding edges that do not change the boundary walk introduce only additional constraints for Painter.
  Let $C$ be the boundary walk of the outer face of $G$, and $A$ and $b$ be the special vertices.
  Any closed walk $W$ in $G$ divides the plane into connected regions.
  Let $\inter{W}$ denote the subgraph of $G$ induced by the vertices that are inside the closure of bounded connected regions of the plane with edges of $W$ removed.
  For a simple path $P$ and two distinct vertices $u$, $v$ on that path, let $P[u,v]$ denote the subpath of $P$ that traverses $P$ from vertex $u$ to vertex $v$.
  Similarly, for a simple cycle $D$ in $G$ and two distinct vertices $u$, $v$ on that cycle, let $D[u,v]$ denote the subpath of $D$ that traverses $D$ in the clockwise direction from vertex $u$ to vertex $v$.
  For a path $Q[u,v]$, we use notation $Q(u,v)$, $Q[u,v)$, and $Q(u,v]$ to denote $Q[u,v] \grminus \set{u,v}$, $Q(u,v) + u$, and $Q(u,v) + v$ respectively.

  The proof divides into several cases.
  The analysis of Case~\ref{case:base} is the basis of the induction and shows that $G$ has at least four vertices.
  The analysis of Cases~\ref{case:bridge} and~\ref{case:cut} shows that $G$ is biconnected.
  The analysis of Cases~\ref{case:triangle} and~\ref{case:ab} shows that vertex $b$ is not adjacent to vertices in $A$.
  Case~\ref{case:final} is the final case of the induction and shows that $G$ does not exist.

\Cases

\Case{$G$ has at most three vertices}\label{case:base}
  Observe that each vertex has a token of value at least $1$.
  If there are at most two vertices in $G$, then all vertices can be colored simultaneously in the same round.

  Now, assume that $G$ has exactly three vertices.
  Observe that all vertices are in $C$.
  If $A$ is empty, choose any vertex $x$ other than $b$, devalue token function for $x$ and set $A=\set{x}$.
  The winning strategy in the resulting game is also a winning strategy in the original game.

  If $A$ has exactly one element $a$, let $x$ be the third vertex other than $a$ and $b$.
  If $x$ is an $(A,b)$-cut, then all three vertices can be colored simultaneously in the same round.
  If $x$ is adjacent to $a$, but not an $(A,b)$-cut, then $x$ has two tokens, $x$ gives away a token to $a$, devalue token function for $x$ and set $A=\set{a,x}$.
  The winning strategy in the resulting game is also a winning strategy in the original game.

  If $x$ is not adjacent to $a$, and not an $(A,b)$-cut, then $x$ has two tokens.
  We add the edge $\set{a,x}$ to the graph.
  Vertex $x$ gives away a token to $a$ and set $A=\set{a,x}$.

  If $A$ has two elements, then both elements of $A$ are not marked in the same round.
  Thus, Painter can color each vertex $v$ in the first round that $v$ is marked in.

\Case{$G$ has a bridge}\label{case:bridge}
  Let edge $e=\set{x,y}$ be a bridge in $G$.
  Let $G_1$ and $G_2$ be the two connected components of $G \grminus e$ with $x$ in $G_1$, and $y$ in $G_2$.
  Without loss of generality, assume that the special vertex $b$ is in $G_1$.
  We divide this case depending on the position of $A$ relative to $e$.

\Subcase{$A \subset G_1$}\label{case:bridge_G1}
  In this case, vertex $y$ is not an $(A,b)$-cut and has two tokens.
  We divide the game into smaller games:
  \begin{itemize}
    \item $\Gamma_1=(G_1, A, b)$,
    \item $\Gamma_2=(G_2, \emptyset, y)$.
  \end{itemize}
  Vertex $y$ gives away a token to $x$.
  As a result, we have that $x$ gets defect only in $\Gamma_1$, and $y$ gets defect only in $\Gamma_2$.
  If $a \in A$ is a $C$-neighbor of $b$ then game $\Gamma_1$ ensures that $\theta(a) \subseteq \set{b}$.

\Subcase{$A \cap G_1 \neq \emptyset$, and $A \cap G_2 \neq \emptyset$}\label{case:bridge_G1G2}
  In this case we have that $A = \set{x,y}$.
  We divide the game into smaller games:
  \begin{itemize}
    \item $\Gamma_1=(G_1, \set{x}, b)$,
    \item $\Gamma_2=(G_2, \emptyset, y)$.
  \end{itemize}
  As Lister is not allowed to mark both $x$ and $y$ in the same round, vertex $x$ gets defect only in $\Gamma_1$, and vertex $y$ gets defect only in $\Gamma_2$.
  Vertex $y$ is not adjacent to $b$, and if $x$ is a $C$-neighbor of $b$ then game $\Gamma_1$ ensures that $\theta(x) \subseteq \set{b}$.

\Subcase{$A \subseteq G_2$}\label{case:bridge_G2}
  Vertex $x$ is either an $(A,b)$-cut or $x=b$, and similarly vertex $y$ is either an $(A,b)$-cut or $y \in A$.
  We divide this case further depending on the size of $G_1$ and these possibilities.

  \Subsubcase{$\norm{G_1} \geq 2$, $x$ is an $(A,b)$-cut}\label{case:bridge_G2_bigcut}
  In this case, vertex $x$ has a single token of value $2$, and vertices in $A$ are not adjacent to $b$.
  We divide the game into smaller games:
  \begin{itemize}
    \item $\Gamma_1=(G_1, \set{x}, b)$,
    \item $\Gamma_2=(G_2+x, A, x)$.
  \end{itemize}
  As a result, vertex $x$ gets at most one defect in $\Gamma_1$, and at most one defect in $\Gamma_2$.

\Subsubcase{$\norm{G_1} \geq 2$, $x=b$}\label{case:bridge_G2_bigb}
  Let $z$ be a $C$-neighbor of $x$ in $G_1$.
  Vertex $z$ is not an $(A,b)$-cut and has two tokens.
  We divide the game into smaller games:
  \begin{itemize}
    \item $\Gamma_1=(G_1, \set{b}, z)$,
    \item $\Gamma_2=(G_2+b, A, b)$.
  \end{itemize}
  Vertex $z$ gives away a token to $b$.
  As a result, vertex $b$ gets at most one defect in $\Gamma_2$ and no defect in $\Gamma_1$ -- rules of the game $\Gamma_1$ enforce $\theta(b) \subseteq \set{z}$ and $z$ gave away a token to $b$.
  If $y \in A$ then $y$ is a $C$-neighbor of $b$ and game $\Gamma_2$ ensures that $\theta(y) \subseteq \set{b}$.
  Vertices in $A$ other than $y$ are not adjacent to $b$.

\Subsubcase{$\norm{G_1} = 1$, $y$ is an $(A,b)$-cut}\label{case:bridge_G2_smallcut}
  We observe that $\norm{G_1}=1$ implies $x=b$ and divide the game into smaller games:
  \begin{itemize}
    \item $\Gamma_1=(G_1, \emptyset, x)$,
    \item $\Gamma_2=(G_2, A, y)$.
  \end{itemize}
  Vertex $x$ obviously gets at most one defect.
  Vertex $y$ gets at most one defect from $x$ and at most one defect in $\Gamma_2$.

\Subsubcase{$\norm{G_1} = 1$, $y \in A$, $y$ has at least two neighbors in $G_2$}\label{case:bridge_G2_smalla2}
  In this case, vertex $y$ has at least two $C$-neighbors in $G_2$.
  Choose vertex $z$, a $C$-neighbor of $y$ in $G_2$ that is not in $A$.
  Vertex $z$ is not an $(A,b)$-cut and has two tokens.
  We divide the game into smaller games:
  \begin{itemize}
    \item $\Gamma_1=(G_1, \emptyset, x)$,
    \item $\Gamma_2=(G_2, A, z)$.
  \end{itemize}
  Vertex $z$ gives away a token to $y$.
  Vertex $x$ obviously gets at most one defect.
  Vertex $y$ gets at most one defect from $x$ and no defect in $\Gamma_2$ -- rules of the game $\Gamma_2$ enforce $\theta(y) \subseteq \set{z}$ and $z$ gave away a token to $y$.

\Subsubcase{$\norm{G_1} = 1$, $y \in A$, $y$ has only one neighbor in $G_2$}\label{case:bridge_G2_smalla1}
  Let $z$ be the only neighbor of $y$ in $G_2$.
  In this case, we apply Case~\ref{case:bridge_G1G2} if $z \in A$, or Case~\ref{case:bridge_G1} if $z \notin A$ of the induction for the bridge $\set{y,z}$.

\medskip

In the analysis of the following cases we assume that each vertex has degree at least two.
Indeed, a vertex of degree one is incident to a bridge.
For each vertex not in $C$, the neighbors of $v$ traversed clockwise induce a simple cycle in $G$.
Let $NC(v)$ denote this cycle.
Furthermore, we assume that $A$ has exactly two elements, say $a_1$ and $a_2$.
If $A = \emptyset$, choose any vertex $a_1$ in $C$ other than $b$ and set $A=\set{a_1}$.
If $A = \set{a_1}$, choose a vertex $a_2$, a $C$-neighbor of $a_1$ other than $b$ and not an $(\set{a_1},b)$-cut.
Vertex $a_2$ gives away a token to $a_1$ and set $A=\set{a_1,a_2}$.

\begin{figure}[h]

\begin{tikzpicture}[scale=0.5]

\tikzset{->-/.style={decoration={
  markings,
  mark=at position #1 with {\arrow{latex}}},postaction={decorate}}}

\begin{scope}[shift={(-6.5,0)}]

\coordinate (w) at (0,0) {};
\coordinate (g11) at (-4,-2.5) {};
\coordinate (g11m) at (-2,-1.25) {};
\coordinate (g12) at (-4,2.5) {};
\coordinate (g12m) at (-2,1.25) {};

\coordinate (g21) at (4,2.5) {};
\coordinate (g22) at (4,-2.5) {};
\coordinate (g21m) at (2,1.25) {};
\coordinate (g22m) at (2,-1.25) {};

\coordinate (g31) at (2.5,-4) {};
\coordinate (g32) at (-2.5,-4) {};
\coordinate (g31m) at (1.25,-2) {};
\coordinate (g11m) at (-1.25,-2) {};

\filldraw[fill=gray!10, draw=none]
(w) to (g11) to[out=-180,in=180] (g12) -- cycle;

\filldraw[fill=gray!25, draw=none]
(w) to (g21) to[out=0,in=0] (g22) -- cycle;

\filldraw[fill=gray!45, draw=none]
(w) to (g31) to[out=-90,in=-90] (g32) -- cycle;

\draw[thick] (w) to (g11) to[out=-180,in=180] (g12) -- cycle;
\draw[thick] (w) to (g21) to[out=0,in=0] (g22) -- cycle;
\draw[thick, ->-=0.6] (g21m) to (w);
\draw[thick] (w) to (g31) to[out=-90,in=-90] (g32) -- cycle;
\draw[thick, ->-=0.6] (g31m) to (w);

\begin{tiny}
\tikzstyle{every node}=[circle,minimum size=5pt,inner sep=0pt,draw,fill]
\node (wn) at (w) {};
\node (y2n) at (g21m) {};
\node (y3n) at (g31m) {};
\node (A1n) at (g12m) {};
\node (A2n) at (g12) {};
\node (bn) at (g11) {};

\tikzstyle{every node}=[inner sep=0pt]
\node at ($(w) + (0.0,0.5)$) {$w$};
\node at ($(g21m) + (0.0,0.6)$) {$y_2$};
\node at ($(g31m) + (0.75,0.0)$) {$y_3$};
\node at ($(g12m) + (0.0,0.6)$) {$a_1$};
\node at ($(g12) + (0.0,0.6)$) {$a_2$};
\node at ($(g11) + (0.0,-0.65)$) {$b$};
\end{tiny}

\tikzstyle{every node}=[inner sep=1pt]
\node at (-4.1,-0.0) {$\Gamma_1$};
\begin{tiny}
\node (G1A1) at ($(g12m) + (0.45,-0.95)$) {$A$};
\node (G1A2) at ($(g12) + (-0.45,-0.85)$) {$A$};
\node (G1b) at ($(g11) + (-0.4, 0.8)$) {$b$};
\draw[->-=1] ($(A2n) + (0.0,-1.3)$) -- (A2n);
\draw[->-=1] ($(A1n) + (0.0,-1.3)$) -- (A1n);
\draw[->-=1] ($(bn) + (0.0,1.3)$) -- (bn);
\end{tiny}

\tikzstyle{every node}=[inner sep=1pt]
\node at (4.3,-0.0) {$\Gamma_2$};
\begin{tiny}
\node (G2A) at ($(wn) + (1.4,0)$) {$A$};
\node (G2b) at ($(y2n) + (0.4,-0.8)$) {$b$};
\draw[->-=1] (G2A) -- (wn);
\draw[->-=1] ($(y2n) + (0.0,-1.2)$) -- (y2n);
\end{tiny}

\tikzstyle{every node}=[inner sep=1pt]
\node at (0.1,-4.3) {$\Gamma_3$};
\begin{tiny}
\node (G3A) at ($(wn) + (0,-1.4)$) {$A$};
\node (G3b) at ($(y3n) + (-0.6,-0.6)$) {$b$};
\draw[->-=1] (G3A) -- (wn);
\draw[->-=1] ($(y3n) + (-0.9,0)$) -- (y3n);
\end{tiny}
\end{scope}

\begin{scope}[shift={(6.5,0)}]

\coordinate (w) at (0,0) {};
\coordinate (g11) at (-4,-2.5) {};
\coordinate (g11m) at (-2,-1.25) {};
\coordinate (g12) at (-4,2.5) {};
\coordinate (g12m) at (-2,1.25) {};

\coordinate (g21) at (4,2.5) {};
\coordinate (g22) at (4,-2.5) {};
\coordinate (g21m) at (2,1.25) {};
\coordinate (g22m) at (2,-1.25) {};

\coordinate (g31) at (2.5,-4) {};
\coordinate (g32) at (-2.5,-4) {};
\coordinate (g31m) at (1.25,-2) {};
\coordinate (g11m) at (-1.25,-2) {};

\filldraw[fill=gray!10, draw=none]
(w) to (g11) to[out=-180,in=180] (g12) -- cycle;

\filldraw[fill=gray!25, draw=none]
(w) to (g21) to[out=0,in=0] (g22) -- cycle;

\filldraw[fill=gray!45, draw=none]
(w) to (g31) to[out=-90,in=-90] (g32) -- cycle;

\draw[thick] (w) to (g11) to[out=-180,in=180] (g12) -- cycle;
\draw[thick] (w) to (g21) to[out=0,in=0] (g22) -- cycle;
\draw[thick] (w) to (g31) to[out=-90,in=-90] (g32) -- cycle;
\draw[thick, ->-=0.6] (g31m) to (w);

\begin{tiny}
\tikzstyle{every node}=[circle,minimum size=5pt,inner sep=0pt,draw,fill]
\node (wn) at (w) {};
\node (y2n) at (g21m) {};
\node (y3n) at (g31m) {};
\node (bn) at (g11) {};

\tikzstyle{every node}=[inner sep=0pt]
\node at ($(w) + (0.0,0.6)$) {$a_1$};
\node at ($(g21m) + (0.0,0.6)$) {$a_2$};
\node at ($(g31m) + (0.75,0.0)$) {$y_3$};
\node at ($(g11) + (0.0,-0.6)$) {$b$};
\end{tiny}

\tikzstyle{every node}=[inner sep=1pt]
\node at (-4.1,-0.0) {$\Gamma_1$};
\begin{tiny}
\node (G1A) at ($(wn) + (-1.4,0)$) {$A$};
\node (G1b) at ($(g11) + (-0.4, 0.8)$) {$b$};
\draw[->-=1] (G1A) -- (wn);
\draw[->-=1] ($(bn) + (0.0,1.3)$) -- (bn);
\end{tiny}

\tikzstyle{every node}=[inner sep=1pt]
\node at (4.3,-0.0) {$\Gamma_2$};
\begin{tiny}
\node (G2A) at ($(wn) + (1.4,0)$) {$A$};
\node (G2b) at ($(y2n) + (0.4,-0.7)$) {$b$};
\draw[->-=1] (G2A) -- (wn);
\draw[->-=1] ($(y2n)+(0.0,-1.1)$) -- (y2n);
\end{tiny}

\tikzstyle{every node}=[inner sep=1pt]
\node at (0.1,-4.3) {$\Gamma_3$};
\begin{tiny}
\node (G3A) at ($(wn) + (0,-1.4)$) {$A$};
\node (G3b) at ($(y3n) + (-0.6,-0.6)$) {$b$};
\draw[->-=1] (G3A) -- (wn);
\draw[->-=1] ($(y3n) + (-0.9,0)$) -- (y3n);
\end{tiny}

\end{scope}

\begin{scope}[shift={(0,-10)}]

\coordinate (w) at (0,0) {};
\coordinate (g11) at (-4,-2.5) {};
\coordinate (g11m) at (-2,-1.25) {};
\coordinate (g12) at (-4,2.5) {};
\coordinate (g12m) at (-2,1.25) {};

\coordinate (g21) at (4,2.5) {};
\coordinate (g22) at (4,-2.5) {};
\coordinate (g21m) at (2,1.25) {};
\coordinate (g22m) at (2,-1.25) {};

\coordinate (g31) at (2.5,-4) {};
\coordinate (g32) at (-2.5,-4) {};
\coordinate (g31m) at (1.25,-2) {};
\coordinate (g11m) at (-1.25,-2) {};

\filldraw[fill=gray!10, draw=none]
(w) to (g11) to[out=-180,in=180] (g12) -- cycle;

\filldraw[fill=gray!25, draw=none]
(w) to (g21) to[out=0,in=0] (g22) -- cycle;

\filldraw[fill=gray!45, draw=none]
(w) to (g31) to[out=-90,in=-90] (g32) -- cycle;

\draw[thick] (w) to (g11) to[out=-180,in=180] (g12) -- cycle;
\draw[thick] (w) to (g21) to[out=0,in=0] (g22) -- cycle;
\draw[thick] (w) to (g31) to[out=-90,in=-90] (g32) -- cycle;
\draw[thick, ->-=0.6] (g31m) to (w);

\begin{tiny}
\tikzstyle{every node}=[circle,minimum size=5pt,inner sep=0pt,draw,fill]
\node (wn) at (w) {};
\node (a1n) at (g21m) {};
\node (a2n) at (g21) {};
\node (y3n) at (g31m) {};
\node (bn) at (g11) {};

\tikzstyle{every node}=[inner sep=0pt]
\node at ($(w) + (0.0,0.6)$) {$w$};
\node at ($(g21m) + (0.0,0.6)$) {$a_1$};
\node at ($(g21) + (0.0,0.6)$) {$a_2$};
\node at ($(g31m) + (0.75,0.0)$) {$y_3$};
\node at ($(g11) + (0.0,-0.6)$) {$b$};
\end{tiny}

\tikzstyle{every node}=[inner sep=1pt]
\node at (-4.1,-0.0) {$\Gamma_1$};
\begin{tiny}
\node (G1A) at ($(wn) + (-1.4,0)$) {$A$};
\node (G1b) at ($(g11) + (-0.4, 0.8)$) {$b$};
\draw[->-=1] (G1A) -- (wn);
\draw[->-=1] ($(bn) + (0.0,1.3)$) -- (bn);
\end{tiny}

\tikzstyle{every node}=[inner sep=1pt]
\node at (4.3,-0.0) {$\Gamma_2$};
\begin{tiny}
\node (G2b) at ($(wn) + (1.4,0)$) {$b$};
\node (G2A1) at ($(a1n) + (0.5,-0.7)$) {$A$};
\node (G2A1) at ($(a2n) + (-0.5,-0.8)$) {$A$};
\draw[->-=1] (G2b) -- (wn);
\draw[->-=1] ($(a1n)+(0.0,-1.1)$) -- (a1n);
\draw[->-=1] ($(a2n)+(0.0,-1.1)$) -- (a2n);
\end{tiny}

\tikzstyle{every node}=[inner sep=1pt]
\node at (0.1,-4.3) {$\Gamma_3$};
\begin{tiny}
\node (G3A) at ($(wn) + (0,-1.4)$) {$A$};
\node (G3b) at ($(y3n) + (-0.6,-0.6)$) {$b$};
\draw[->-=1] (G3A) -- (wn);
\draw[->-=1] ($(y3n) + (-0.9,0)$) -- (y3n);
\end{tiny}

\end{scope}

\end{tikzpicture}

\caption{
Game division in Cases~\ref{case:cut_G1}, \ref{case:cut_G2A}, and \ref{case:cut_G2noA}, respectively.
}
\label{fig:cut}
\end{figure}

\Case{$G$ has a cut-point}\label{case:cut}
  Let vertex $w$ be a cut-point in $G$.
  Let $G_1,\ldots,G_k$ be the components of $G \grminus w$.
  Update each graph $G_i$ by adding vertex $w$ back to it.
  Without loss of generality, assume that $b$ is in $G_1$ and that $A$ is contained either in $G_1$, or in $G_2$.
  Let $y_i$, for $i=1\ldots,k$, be any $C$-neighbor of $w$ in $G_i$.
  We divide this case depending on the position of $A$ relative to $w$.
  Figure~\ref{fig:cut} depicts the game divisions that we use in the subcases.

\Subcase{$A \subset G_1$}\label{case:cut_G1}
  We divide the game into smaller games:
  \begin{itemize}
    \item $\Gamma_1=(G_1,A,b)$,
    \item $\Gamma_i=(G_i,\set{w},y_i)$, for $i=2,\ldots,k$.
  \end{itemize}
  Each vertex $y_i$, for $i=2,\ldots,k$ gives away a token to vertex $w$.
  As a result, vertex $w$ gets no defect in the games $\Gamma_2,\ldots,\Gamma_k$. 
  If $a \in A$ is a $C$-neighbor of $b$, then game $\Gamma_1$ ensures that $\theta(a) \subseteq \set{b}$.

\Subcase{$A \subset G_2$, $w \in A$}\label{case:cut_G2A}
  Without loss of generality, $w=a_1$.
  We divide the game into smaller games:
  \begin{itemize}
    \item $\Gamma_1=(G_1,\set{a_1},b)$,
    \item $\Gamma_2=(G_2, \set{a_1}, a_2)$,
    \item $\Gamma_i=(G_i,\set{w},y_i)$, for $i=3,\ldots,k$.
  \end{itemize}
  Each vertex $y_i$, for $i=3,\ldots,k$ gives away a token to vertex $w$.
  Vertices $w=a_1$ and $a_2$ are not marked in the same round.
  As a result, vertex $w$ gets no defect in the games $\Gamma_2,\ldots,\Gamma_k$. 
  If $a_1$ is a $C$-neighbor of $b$, then game $\Gamma_1$ ensures that $\theta(a_1) \subseteq \set{b}$.
  Vertex $a_2$ is not adjacent to $b$.

\Subcase{$A \subset G_2$, $w \notin A$}\label{case:cut_G2noA}
  In this case, vertex $w$ is an $(A,b)$-cut.
  We divide the game into smaller games:
  \begin{itemize}
    \item $\Gamma_1=(G_1,\set{w},b)$,
    \item $\Gamma_2=(G_2,A,w)$,
    \item $\Gamma_i=(G_i,\set{w},y_i)$, for $i=3,\ldots,k$.
  \end{itemize}
  Each vertex $y_i$, for $i=3,\ldots,k$ gives away a token to vertex $w$.
  Vertex $w$ gets at most one defect in each of the games $\Gamma_1$, $\Gamma_2$ and no defect in the games $\Gamma_3,\ldots,\Gamma_k$. 
  Vertices in $A$ are not adjacent to $b$.

\medskip

In the analysis of the following cases we assume that $G$ is biconnected.
Thus, the boundary walk $C$ is a simple cycle.
For a vertex $v$ in $C$ we define $v^+$, and $v^-$ to be respectively the next, and the previous vertex in $C$ when $C$ is traversed clockwise.
We define the path $NP(v)$ that traverses neighbors of $v$ clockwise from $v^+$ to $v^-$.
For any two vertices $u$ and $v$ in $C$, let $N(u,v)$ denote the set of common neighbors of $u$ and $v$, \ie{} $N(u) \cap N(v)$.
Now, assume $u$ and $v$ are $C$-neighbors.
The \emph{minimum common neighbor} of $u$ and $v$, denoted $\minn(u,v)$, is a vertex $w$ in $N(u,v)$ such that $\inter{u,v,w,u}$ contains no other common neighbor of $u$ and $v$.
The \emph{maximum common neighbor} of $u$ and $v$, denoted $\maxn(u,v)$, is a vertex $w$ in $N(u,v)$ such that $\inter{u,v,w,u}$ contains all other common neighbors of $u$ and $v$.
As $u$ and $v$ are $C$-neighbors, any two common neighbors $x_1$, $x_2$ of $u$ and $v$ are on the same side of the edge $\set{u,v}$.
Thus, we have that one of the sets $\inter{x_1,u,v,x_1}$, $\inter{x_2,u,v,x_2}$ is contained in the other and that both $\minn(u,v)$ and $\maxn(u,v)$ exist.
Let $a_1$ and $a_2$ be the elements of $A$ so that $a_1$, $a_2$, $b$ appear in this order when $C$ is traversed clockwise.

\Case{$C$ is a triangle}\label{case:triangle}
  We divide this case depending on the existence of a common neighbor of $a_1$, $a_2$, and $b$.

\begin{figure}[h]
\centering
\begin{tikzpicture}[xscale=0.9, yscale=0.6]

\tikzset{->-/.style={decoration={
  markings,
  mark=at position #1 with {\arrow{latex}}},postaction={decorate}}}

\begin{scope}

\coordinate (a2) at (-5,0) {};
\coordinate (a1) at (5,0) {};
\coordinate (b) at (0,10) {};
\coordinate (d) at (0,3) {};

\coordinate (l1) at (-2,3.3) {};
\coordinate (l2) at (-2,5) {};
\coordinate (r1) at (2,3.3) {};
\coordinate (r2) at (2,5) {};

\filldraw[fill=gray!10, draw=none]
(d) to[out=0,in=-130] (r1) to[out=70,in=-70] (r2) to[out=110,in=-70] (b) -- cycle;

\filldraw[fill=gray!25, draw=none]
  (d) to[out=180,in=-50] (l1) to[out=110,in=-110] (l2) to[out=70,in=250] (b) -- cycle;

\filldraw[fill=gray!45, draw=none]
  (a2) -- (d) -- (a1) -- cycle;

\draw[thick] (a1) -- (a2);
\draw[thick] (a1) -- (b);
\draw[thick] (a2) -- (b);

\draw[thick,->-=.5] (d) to (a1);
\draw[thick,->-=.5] (d) to (a2);
\draw[thick] (b) -- (d);
\draw[thick] (d) to[out=180,in=-50] (l1);
\draw[thick] (l1) to[out=110,in=-110] (l2);
\draw[thick] (l2) to[out=70,in=250] (b);

\draw[thick] (d) to[out=0,in=-130] (r1);
\draw[thick] (r1) to[out=70,in=-70] (r2);
\draw[thick] (r2) to[out=110,in=-70] (b);

\draw[->-=.4] (l1) -- (a2);
\draw[->-=.4] (l2) -- (a2);
\draw[->-=.4] (r1) -- (a1);
\draw[->-=.4] (r2) -- (a1);

\tikzstyle{every node}=[circle,minimum size=5pt,inner sep=0pt,draw,fill]
\node (a1n) at (a1) {};
\node (a2n) at (a2) {};
\node (bn) at (b) {};

\tikzstyle{every node}=[circle,minimum size=11pt,inner sep=0pt,draw,fill=white]
\begin{tiny}
\node (dn) at (0,3) {$d$};
\end{tiny}

\begin{tiny}
\tikzstyle{every node}=[inner sep=0pt]
\node at ($(a1) + (0.3,-0.3)$) {$a_1$};
\node at ($(a2) + (-0.3,-0.3)$) {$a_2$};
\node at ($(b) + (0,0.4)$) {$b$};
\end{tiny}

\tikzstyle{every node}=[inner sep=1pt]
\node at (1.05,5) {$\Gamma_1$};
\begin{tiny}
\node at (-1.8,3.9) {$P_2$};
\node (G1A) at ($(b) + (0.2,-1.3)$) {$A$};
\node (G1b) at ($(d) + (0.45, 0.8)$) {$b$};
\end{tiny}
\draw[->-=0.7] (G1A) -- (bn);
\draw[->-=1] (G1b) -- (dn);

\tikzstyle{every node}=[inner sep=1pt]
\node at (-1,5) {$\Gamma_2$};
\begin{tiny}
\node at (1.8,3.9) {$P_1$};
\node (G2A) at ($(b) + (-0.2,-1.3)$) {$A$};
\node (G2b) at ($(d) + (-0.45, 0.8)$) {$b$};
\end{tiny}
\draw[->-=0.7] (G2A) -- (bn);
\draw[->-=1] (G2b) -- (dn);

\tikzstyle{every node}=[inner sep=1pt]
\node at (0.1,0.9) {$\Gamma_3$};
\begin{tiny}
\node (G3A1) at ($(a2) + (1.1,0.3)$) {$A$};
\node (G3A2) at ($(a1) + (-1.1,0.3)$) {$A$};
\node (G3b) at ($(d) + (0, -1.0)$) {$b$};
\end{tiny}
\draw[->-=0.5] (G3A1) -- (a2n);
\draw[->-=0.5] (G3A2) -- (a1n);
\draw[->-=1] (G3b) -- (dn);

\end{scope}

\end{tikzpicture}

\caption{
Game division in Case~\ref{case:triangle_common}.
}
\label{fig:triangle_common}
\end{figure}

\Subcase{Vertex $d$ is adjacent to $a_1$, $a_2$, and $b$}\label{case:triangle_common}
  Figure~\ref{fig:triangle_common} depicts the game division that we use in this case.
  Let $G_1$ be the graph $\inter{a_1,d,b,a_1}$.
  Let $P_1$ be the path $NP(a_1)(d,b)$.
  Let $G_2$ be the graph $\inter{a_2,b,d,a_2}$.
  Let $P_2$ be the path $NP(a_2)(b,d)$.
  Let $G_3$ be the graph $\inter{a_1,a_2,d,a_1}$.
  Vertex $d$, each vertex in $P_1$, and each vertex in $P_2$ has three tokens.
  We divide the game into smaller games:
  \begin{itemize}
    \item $\Gamma_1=(G_1 \grminus a_1, \set{b}, d)$,
    \item $\Gamma_2=(G_2 \grminus a_2, \set{b}, d)$,
    \item $\Gamma_3=(G_3, \set{a_1,a_2}, d)$.
  \end{itemize}
  Vertex $d$ gives away a token to $a_1$, and one token to $a_2$.
  Each vertex in $P_1$ gives away a token to $a_1$.
  Each vertex in $P_2$ gives away a token to $a_2$.

  As a result, vertices $a_1$, and $a_2$ get no defect in $\Gamma_1$, $\Gamma_2$, $\Gamma_3$.
  Thus, the only vertex that can give defect to $a_1$, or $a_2$ is $b$.
  Vertex $d$ gets at most one defect in each of the games $\Gamma_1$, $\Gamma_2$, $\Gamma_3$.
  We have that $\theta(b) \subseteq \set{a_1,a_2,d}$ and each of these vertices is colored in a different round.
  Thus, vertex $b$ gets at most one defect.

\begin{figure}[h]
\begin{tikzpicture}[scale=0.9, yscale=0.75]
\tikzset{->-/.style={decoration={
  markings,
  mark=at position #1 with {\arrow{latex}}},postaction={decorate}}}

\begin{scope}[shift={(-10,0)}]

\coordinate (a2) at (-5,0) {};
\coordinate (a1) at (4.2,0) {};
\coordinate (g') at (1,4.7) {};
\coordinate (b) at (1,10) {};
\coordinate (x1) at (1.9,5) {};
\coordinate (x2) at (0.1,5) {};
\coordinate (x3) at (1,3.65) {};
\coordinate (y2) at (-1.4,5) {};
\coordinate (y3) at (1,0.6) {};
\coordinate (c) at (1,2) {};

\coordinate (x1x3) at (1.75,3.9) {};
\coordinate (x3x2) at (0.25,3.9) {};
\coordinate (x2x1_1) at (0.7,5.85) {};
\coordinate (x2x1_2) at (1.3,5.85) {};

\coordinate (x2y2) at (-1.3,4.35) {};
\coordinate (y2x2) at (-0.55,5.75) {};

\filldraw[fill=gray!10, draw=none]
(y2) to[out=60,in=185] (y2x2) to[out=-5,in=90] (x2) to[out=-140,in=0] (x2y2) to[out=160, in=-90] cycle;

\filldraw[fill=gray!45, draw=none]
(x3) to[out=-45,in=20] (c) to[out=-45,in=20] (y3) to[out=180,in=-150] (c) to[out=180,in=-145] cycle;

\draw[thick] (a1) -- (a2);
\draw[thick] (a1) -- (b);
\draw[thick] (a2) -- (b);

\draw[->-=.5] (x1) -- (a1);
\draw[->-=.5] (x1) -- (b);
\draw[->-=.5] (y2) to (a2);
\draw[->-=.5] (y2) to (b);
\draw[->-=.5] (y3) to (a1);
\draw[->-=.5] (y3) to (a2);
\draw[->-=.5] (x2) to (a2);
\draw[->-=.5] (x2) to (b);
\draw[->-=.5] (x1) to (b);
\draw[->-=.5] (x3) to (a1);
\draw[->-=.5] (x3) to (a2);

\draw[->-=.5] (x2x1_1) to (b);
\draw[->-=.5] (x2x1_2) to (b);
\draw[->-=.5] (x1x3) to (a1);
\draw[->-=.5] (x3x2) to (a2);

\draw[->-=.5] (y2x2) to (b);
\draw[->-=.5] (x2y2) to (a2);

\draw[->-=.5] (c) to (a2);
\draw[->-=.5] (c) to (a1);

\draw[thick] (x2) to[out=75,in=190] (x2x1_1) to[out=8,in=172] (x2x1_2) to[out=-10,in=105] (x1);
\draw[thick] (x1) to[out=-93,in=55] (x1x3) to[out=-135, in=3] (x3);
\draw[thick] (x3) to[out=-177,in=-45] (x3x2) to[out=125, in=-87] (x2);

\draw[thick] (y2) to[out=60,in=185] (y2x2) to[out=-5,in=90] (x2);
\draw[thick] (x2) to[out=-140,in=0] (x2y2) to[out=160, in=-90] (y2);

\draw[thick] (x3) to[out=-45,in=20] (c);
\draw[thick] (c) to[out=-45,in=20] (y3);
\draw[thick] (y3) to[out=180,in=-150] (c);
\draw[thick] (c) to[out=180,in=-145] (x3);

\tikzstyle{every node}=[circle,minimum size=5pt,inner sep=0pt,draw,fill]
\begin{tiny}
  \node (a2n) at (a2) {};
  \node (a1n) at (a1) {};
  \node (bn) at (b) {};
  \node (x1n) at (x1) {};
  \node (x2n) at (x2) {};
  \node (x3n) at (x3) {};
  \node (y2n) at (y2) {};
  \node (y3n) at (y3) {};
  \node (cn) at (c) {};
\end{tiny}

\begin{tiny}
\tikzstyle{every node}=[inner sep=0pt]
\node at ($(a1) + (0.3,-0.3)$) {$a_1$};
\node at ($(a2) + (-0.3,-0.3)$) {$a_2$};
\node at ($(b) + (0,0.4)$) {$b$};
\node at ($(x1) + (-0.3,-0.1)$) {$x_1$};
\node at ($(x1) + (0.35,-0.1)$) {$y_1$};
\node at ($(x2) + (+0.35,-0.1)$) {$x_2$};
\node at ($(x3) + (0,0.35)$) {$x_3$};
\node at ($(y2) + (-0.3,0.0)$) {$y_2$};
\node at ($(y3) + (0.0,-0.35)$) {$y_3$};
\node at (g') {$G'$};
\node at ($(c) + (0.4,0)$) {$c$};
\end{tiny}

\begin{tiny}
\tikzstyle{every node}=[inner sep=1pt]
\node at (-0.5,5.35) {$\Gamma_2$};
\node (G2A) at ($(y2) + (0.4,0.2)$) {$A$};
\node (G2b) at ($(x2) + (-0.7, -0.2)$) {$b$};
\end{tiny}
\draw[->-=0.8] ($(y2)+(0.6,0)$) -- (y2);
\draw[->-=0.8] ($(x2)+(-0.6,0)$) -- (x2);


\begin{tiny}
\tikzstyle{every node}=[inner sep=1pt]
\node at (0.98,2.4) {$\Gamma_3$};
\node (G2b) at ($(x3)+(0.15,-0.6)$) {$b$};
\node (G2A) at ($(y3)+(-0.2,0.6)$) {$A$};
\end{tiny}
\draw[->-=1] ($(x3)+ (0,-0.75)$) -- (x3n);
\draw[->-=1] ($(y3)+ (0,0.75)$) -- (y3n);
\end{scope}

\begin{scope}[xscale=0.8, yscale=0.9,  shift={(-3.0,3)}]

\coordinate (x1) at (3,4.5) {};
\coordinate (x2) at (-3,4.5) {};
\coordinate (x3) at (0,0) {};
\coordinate (z6) at (-3,2.5) {};
\coordinate (z6z5_1) at (-1.5,2.60) {};
\coordinate (z6z5_2) at (-0.2,2.9) {};
\coordinate (z6z5_3) at (1.1,3.4) {};

\filldraw[fill=gray!10, draw=none]
(z6) to[out=92,in=-95] (x2) to[out=60,in=120] (x1) to[out=210,in=0] cycle;

\filldraw[fill=gray!25, draw=none]
(x3) -- (x1) to[out=-90,in=3] cycle;

\filldraw[fill=gray!45, draw=none]
(x3) to[out=-177,in=-90] (z6)--cycle;

\draw[thick] (x3) -- (x1);
\draw[thick] (x3) -- (z6);
\draw[->-=.5] (z6z5_1) to (x3);
\draw[->-=.5] (z6z5_2) to (x3);
\draw[->-=.5] (z6z5_3) to (x3);

\draw[thick] (x2) to[out=60,in=120] (x1);
\draw[thick] (x1) to[out=210,in=0] (z6);
\draw[thick] (z6) to[out=92,in=-95] (x2);

\draw[thick] (x1) to[out=-90,in=3] (x3);
\draw[thick] (x3) to[out=-177,in=-90] (z6);

\tikzstyle{every node}=[circle,minimum size=5pt,inner sep=0pt,draw,fill]
\begin{tiny}
\node (x1n) at (x1) {};
\node (x2n) at (x2) {};
\node (x3n) at (x3) {};
\node (z6n) at (z6) {};
\end{tiny}

\begin{tiny}
\tikzstyle{every node}=[inner sep=0pt]
\node at ($(x1) + (0.9,0)$) {$x_1=z_5$};
\node at ($(x2) + (-0.4,0)$) {$x_2$};
\node at ($(x3) + (0,-0.4)$) {$x_3$};
\node at ($(z6) + (-0.4,0)$) {$z_6$};
\end{tiny}

\tikzstyle{every node}=[inner sep=1pt]
\node at (0,4.5) {$\Gamma_4$};
\begin{tiny}
\node at (-0.1,3.3) {$P$};
\node (G4b) at ($(x2)+(0.8,0)$) {$b$};
\node (G4A) at ($(x1)+(-0.8,0)$) {$A$};
\end{tiny}
\draw[->-=1] (G4b) -- (x2n);
\draw[->-=1] (G4A) -- (x1n);

\tikzstyle{every node}=[inner sep=1pt]
\node at (-1.66,0.9) {$\Gamma_6$};
\begin{tiny}
\node (G6A) at ($(z6)+(0.5,-0.85)$) {$A$};
\node (G6b) at ($(x3)+(-0.9,0.4)$) {$b$};
\end{tiny}
\draw[->-=0.8] (G6b) -- (x3n);
\draw[->-=0.8] (G6A) -- (z6n);

\tikzstyle{every node}=[inner sep=1pt]
\node at (1.85,1.75) {$\Gamma_5$};
\begin{tiny}
\node (G5b) at ($(x3)+(0.83,0.55)$) {$b$};
\node (G5A) at ($(x1)+(-0.35,-1.0)$) {$A$};
\end{tiny}
\draw[->-=0.8] (G5b) -- (x3n);
\draw[->-=0.8] (G5A) -- (x1n);
\end{scope}

\end{tikzpicture}
\caption{
Game division in Case~\ref{case:triangle_nocommon}.
On the left: graph $G$.
Since $x_1 = y_1$, $G_1$ consists only of $x_1$.
Vertex $c$ is an $(A,b)$-cut in $\Gamma_3$.
On the right: graph $G'$ with boundary $C'$.
} 
\label{fig:triangle_nocommon}
\end{figure}

\Subcase{There is no common neighbor of $a_1$, $a_2$, and $b$}\label{case:triangle_nocommon}
  Figure~\ref{fig:triangle_nocommon} depicts the game division that we use in this case.
  Let $G_0$ be the subgraph of $G$ induced by the vertices $\set{a_1,a_2,b}$.
  Let $x_i = \maxn(a_i,b)$, and $y_i = \minn(a_i,b)$, for $i=1,2$.
  Similarly, let $x_3 = \maxn(a_1,a_2)$, and $y_3 = \minn(a_1,a_2)$.
  As there is no common neighbor of $a_1$, $a_2$, and $b$, vertices $x_1$, $x_2$, and $x_3$ are pairwise different.
  Let $G_i$, for $i=1,2,3$, be the connected component of $G \grminus \set{a_1,a_2,b,x_1,x_2,x_3}$ that contains vertex $y_i$.
  In particular, if $x_i = y_i$ then $G_i$ is an empty graph.
  Let $G'$ be the graph obtained from $G$ by removing $a_1$, $a_2$, $b$, $G_1$, $G_2$, and $G_3$.
  Observe that the choice of $x_1$, $x_2$, $x_3$ guarantees that the boundary walk $C'$ of $G'$ is a simple cycle, and that each vertex in $C'$ except $x_1$, $x_2$, $x_3$ is a neighbor of exactly one of the vertices $a_1$, $a_2$, or $b$.
  Let $z_5$ be the first (closest to $x_1$) neighbor of $x_3$ on the path $C'[x_1,x_3]$.
  Such a neighbor exists, and it is possible that $z_5 = x_1$.
  Similarly, let $z_6$ be the last (closest to $x_2$) neighbor of $x_3$ on the path $C'[x_3,x_2]$.
  It is possible that $z_6 = x_2$.
  Let $P$ denote the inverted path $NC(x_3)(z_6,z_5)$.
  Let $G_4$ be the graph $\inter{z_5,P,C'[z_6,z_5]}$.
  Let $G_5$ be the graph $\inter{C'[z_5,x_3],z_5}$.
  Let $G_6$ be the graph $\inter{C'[x_3,z_6],x_3}$.
  Update each $G_i$, for $i=1,2,3$, by adding vertex $x_i$ to it.
  We divide the game into smaller games:
  \begin{itemize}
    \item $\Gamma_0=(G_0,A,b)$,
    \item $\Gamma_1=(G_1,\set{y_1},x_1)$ (if $x_1 = y_1$, $\Gamma_1$ is not used),
    \item $\Gamma_2=(G_2,\set{y_2},x_2)$ (if $x_2 = y_2$, $\Gamma_2$ is not used),
    \item $\Gamma_3=(G_3,\set{y_3},x_3)$ (if $x_3 = y_3$, $\Gamma_3$ is not used),
    \item $\Gamma_4=(G_4,\set{x_1},x_2)$,
    \item $\Gamma_5=(G_5,\set{z_5},x_3)$,
    \item $\Gamma_6=(G_6,\set{z_6},x_3)$.
  \end{itemize}

  Each vertex adjacent to $a_1$, $a_2$, $b$ gives away a token to each of the adjacent vertices $a_1$, $a_2$, $b$.
  This way we get that each vertex $a_1$, $a_2$, $b$ receives at most one defect and that $\theta(a_1)$, and $\theta(a_2)$ are contained in $\set{b}$.

  There is no common neighbor of $a_1$, $a_2$, and $b$, so each vertex gives away at most two tokens.
  Vertices that give away exactly two tokens to special vertices are $x_i$, $y_i$, and $(\set{y_i},x_i)$-cuts in $G_i$, for $i=1,2,3$.
  Thus, each vertex in $G_1$, $G_2$, $G_3$ has enough tokens for the games $\Gamma_1$, $\Gamma_2$, $\Gamma_3$.
  Each vertex $x_1$, $x_2$, $x_3$ gets at most one defect in the games $\Gamma_1$, $\Gamma_2$, $\Gamma_3$.
  Each vertex $x_1$, $x_2$, $x_3$ gets at most two defects in the games $\Gamma_4$, $\Gamma_5$, $\Gamma_6$.

  Vertices in $C'$ other than $x_1$, $x_2$, $x_3$ give away only one token to special vertices and have two tokens of value $3$ left.
  When vertex $z_5$ is different than $x_1$, then it is marked with a token of value $2$ in game $\Gamma_4$ when $x_3$ is colored in this round.
  This way we get that vertex $z_5$ different than $x_1$ gets at most two defects in $\Gamma_4$ and one defect in $\Gamma_5$ if it gets colored in the same round as $x_3$.
  If $z_5$ is colored in a different round, then it gets at most three defects in $\Gamma_4$ and no defect in $\Gamma_5$.

  Similarly, when vertex $z_6$ is different than $x_2$, then it is marked with a token of value $2$ in game $\Gamma_4$ when $x_3$ is colored in this round.

\Case{Special vertex $b$ is adjacent to an element of $A$}\label{case:ab}
  Without loss of generality, assume that $b$ is adjacent to $a_1$.
  We divide this case depending whether $\set{a_1,b}$ is an edge of $C$ or not.

\begin{figure}[h]
\centering
\begin{tikzpicture}[scale=0.9, yscale=0.75]

\tikzset{->-/.style={decoration={
  markings,
  mark=at position #1 with {\arrow{latex}}},postaction={decorate}}}

\begin{scope}
\coordinate (a1) at (-0.75,5) {};
\coordinate (a2) at (0.75,5) {};
\coordinate (b) at (0,0) {};
\coordinate (ba1) at (-2.5,2.5) {};
\coordinate (a2b) at (2.5,2.5) {};

\filldraw[fill=gray!10, draw=none]
  (a1) to[out=0,in=180] (a2) to[out=-15,in=90] (a2b) to[out=-90,in=0] (b) -- cycle;

\filldraw[fill=gray!45, draw=none]
  (a1) -- (b) to[out=180,in=-90] (ba1) to[out=90,in=-175] cycle;

\tikzstyle{every node}=[circle,minimum size=5pt,inner sep=0pt,draw,fill]
\begin{tiny}
  \node (a1n) at (a1) {};
  \node (a2n) at (a2) {};
  \node (bn) at (b) {};
\end{tiny}

\begin{tiny}
\tikzstyle{every node}=[inner sep=0pt]
\node at  ($(a1) + (0.0,0.35)$) {$a_1$};
\node at  ($(a2) + (0.1,0.35)$) {$a_2$};
\node at  ($(b) + (-0,-0.4)$) {$b$};
\end{tiny}

\draw[thick] (b) -- (a1);
\draw[thick] (b) to[out=180,in=-90] (ba1);
\draw[thick] (ba1) to[out=90,in=-175] (a1);
\draw[thick] (a1) to[out=0,in=180] (a2);
\draw[thick] (a2) to[out=-15,in=90] (a2b);
\draw[thick] (a2b) to[out=-90,in=0] (b);

\tikzstyle{every node}=[inner sep=1pt]
\node at (-1.4,2.4) {$\Gamma_2$};
\begin{tiny}
\node (G2b) at ($(a1)+(-0.35,-0.7)$) {$b$};
\node (G2A) at ($(b)+(-0.5,0.65)$) {$A$};
\end{tiny}

\draw[->-=1] (G2A) -- (bn);
\draw[->-=1] (G2b) -- (a1n);

\tikzstyle{every node}=[inner sep=1pt]
\node at (0.9,2.4) {$\Gamma_1$};
\begin{tiny}
\node (G1A1) at ($(a1)+(0.45,-0.65)$) {$A$};
\node (G1A2) at ($(a2) + (-0.4,-0.65)$) {$A$};
\node (G1b) at ($(b) + (0.4, 0.65)$) {$b$};
\end{tiny}
\draw[->-=1] (G1A1) -- (a1n);
\draw[->-=1] (G1A2) -- (a2n);
\draw[->-=1] (G1b) -- (bn);

\end{scope}
\end{tikzpicture}

\caption{
Game division in Case~\ref{case:ab_chord}.
}
\label{fig:ab_chord}
\end{figure}

\Subcase{$\set{a_1,b}$ is a chord of $C$}\label{case:ab_chord}
  Figure~\ref{fig:ab_chord} depicts the game division that we use in this case.
  Observe that in this case $a_1$ is not a $C$-neighbor of $b$ and $a_1$ can get defect from any vertex in $G$.
  Let $G_1$ be $\inter{C[a_1,b],a_1}$.
  Let $G_2$ be $\inter{C[b,a_1],b}$.
  We divide the game into smaller games:
  \begin{itemize}
    \item $\Gamma_1=(G_1, A, b)$,
    \item $\Gamma_2=(G_2, \set{b}, a_1)$.
  \end{itemize}
  When vertex $b$ gets defect in $\Gamma_2$ then $b$ gets the defect from $a_1$.
  Similarly, when vertex $a_1$ gets defect in $\Gamma_1$ then $a_1$ gets the defect from $b$.
  Thus, each vertex $a_1$, $b$ gets at most one defect in both games $\Gamma_1$, $\Gamma_2$.

\Subcase{$\set{a_1,b}$ is an edge of $C$}\label{case:ab_cycle}
  Observe that $\set{a_2,b}$ is not an edge of $C$, as then $C$ would be a triangle and we could apply Case~\ref{case:triangle}.
  We divide this case further, depending whether $a_1$ has a neighbor in $C$ other than $a_2$ and $b$.
  Figure~\ref{fig:ab_cycle} depicts the game divisions that we use in the subcases.

\begin{figure}[h]
\centering
\begin{tikzpicture}[scale=0.9, yscale=0.75]

\tikzset{->-/.style={decoration={
  markings,
  mark=at position #1 with {\arrow{latex}}},postaction={decorate}}}

\begin{scope}[shift={(-7.5,0)}]
  \coordinate (b) at (-1,5) {};
  \coordinate (a1) at (1,5) {};
  \coordinate (a2) at (2.5,3.5) {};
  \coordinate (c) at (-1,0) {};
  \coordinate (b-) at (-2.5,3.5) {};
  \coordinate (e) at (0,-0.1) {};
  \coordinate (d) at (0,2.4) {};
  \coordinate (b-c) at (-1.7,2.47) {};
  \coordinate (cd) at (-0.45,1.88) {};
  \coordinate (d1a2) at (0.85,3.1) {};
  \coordinate (d2a2) at (1.7,3.4) {};

\filldraw[fill=gray!10, draw=none]
  (c) to[out=160,in=-100] (b-) to[out=-45,in=95] cycle;

\filldraw[fill=gray!10, draw=none]
  (a2) to[out=-80,in=1] (e) to[out=179,in=-13] (c) to[out=85,in=-135] (d) to[out=45,in=180] cycle;

\filldraw[fill=gray!45, draw=none]
  (d) -- (b) -- (a1) -- cycle;

\begin{tiny}
\tikzstyle{every node}=[circle,minimum size=5pt,inner sep=0pt,draw,fill]
  \node (a1n) at (a1) {};
  \node (a2n) at (a2) {};
  \node (dn) at (d) {};
  \node (cn) at (c) {};
  \node (b-n) at (b-) {};
  \node (bn) at (b) {};
\end{tiny}

\draw[thick,->-=0.5] (b-) to (b);
\draw[thick] (b) to (a1);
\draw[thick] (a1) to (a2);
\draw[thick] (a2) to[out=-80,in=1] (e);
\draw[thick] (e) to[out=179,in=-13] (c);
\draw[thick] (c) to[out=160,in=-100] (b-);

\draw[thick] (b-) to[out=-45,in=95] (c);
\draw[thick] (c) to[out=85,in=-135] (d);
\draw[thick] (d) to[out=45,in=180] (a2);

\draw[thick] (d) -- (b);
\draw[->-=0.55, thick] (d) -- (a1);

\draw[->-=0.5] (c) -- (b);
\draw[->-=0.5] (b-c) -- (b);
\draw[->-=0.5] (cd) -- (b);
\draw[->-=0.5] (d1a2) -- (a1);
\draw[->-=0.5] (d2a2) -- (a1);

\begin{tiny}
\tikzstyle{every node}=[inner sep=0pt]
\node at  ($(b) + (0,0.35)$) {$b$};
\node at  ($(a1) + (0.1,0.35)$) {$a_1$};
\node at  ($(a2) + (0.35,0.0)$) {$a_2$};
\node at  ($(c) + (0.0,-0.4)$) {$c$};
\node at  ($(b-) + (-0.35,0.0)$) {$b^-$};
\node at  ($(d) + (0.0,-0.4)$) {$d$};
\end{tiny}

\tikzstyle{every node}=[inner sep=1pt]
\node at (1,1.5) {$\Gamma_1$};
\begin{tiny}
\node at (1.3,3.05) {$P$};
\node at (-0.52,1.3) {$Q$};
\end{tiny}
\begin{tiny}
\node (G1A) at ($(a2) + (-0.55,-0.55)$) {$A$};
\node (G1b) at ($(b-) + (0.18,-0.8)$) {$b$};
\end{tiny}
\draw[->-=1] (G1A) -- (a2n);
\draw[->-=1] (G1b) -- (b-n);

\tikzstyle{every node}=[inner sep=1pt]
\node at (0,4) {$\Gamma_2$};
\begin{tiny}
\node at (-1.56,1.7) {$Q$};
\node (G2A1) at ($(a1) + (-0.6,-0.35)$) {$A$};
\node (G2A2) at ($(d) + (0,0.8)$) {$A$};
\node (G2b) at ($(b) + (0.55,-0.35)$) {$b$};
\end{tiny}
\draw[->-=1] (G2A1) -- (a1n);
\draw[->-=1] (G2A2) -- (dn);
\draw[->-=1] (G2b) -- (bn);

\end{scope}

\begin{scope}[shift={(0,0)}]
  \coordinate (b) at (-1,5) {};
  \coordinate (a1) at (1,5) {};
  \coordinate (a2) at (2.5,3.5) {};
  \coordinate (d) at (-1,0) {};
  \coordinate (b-) at (-2.5,3.5) {};
  \coordinate (e) at (0,-0.1) {};

\filldraw[fill=gray!45, draw=none]
  (b) -- (a1) -- (d) to[out=160,in=-100] (b-) to[out=77,in=185] cycle;

\filldraw[fill=gray!10, draw=none]
  (a1) -- (a2) to[out=-80,in=1] (e) to[out=179,in=-13] (d) -- cycle;

\begin{tiny}
\tikzstyle{every node}=[circle,minimum size=5pt,inner sep=0pt,draw,fill]
  \node (a1n) at (a1) {};
  \node (a2n) at (a2) {};
  \node (dn) at (d) {};
  \node (bn) at (b) {};
\end{tiny}

\draw[thick] (b-) to[out=77,in=185] (b);
\draw[thick] (b) to (a1);
\draw[thick] (a1) to (a2);
\draw[thick] (a2) to[out=-80,in=1] (e);
\draw[thick] (e) to[out=179,in=-13] (d);
\draw[thick] (d) to[out=160,in=-100] (b-);

\draw[->-=.5,thick] (d) -- (a1);

\begin{tiny}
\tikzstyle{every node}=[inner sep=0pt]
\node at  ($(b) + (0.0,0.4)$) {$b$};
\node at  ($(a1) + (0.1,0.35)$) {$a_1$};
\node at  ($(a2) + (0.35,0.0)$) {$a_2$};
\node at  ($(d) + (0,-0.4)$) {$d$};
\end{tiny}

\tikzstyle{every node}=[inner sep=1pt]
\node at (-1.1,2.8) {$\Gamma_2$};
\begin{tiny}
\node (G2b) at ($(b) + (0.3,-0.7)$) {$b$};
\node (G2A1) at ($(a1) + (-0.6,-0.5)$) {$A$};
\node (G2A2) at ($(d) + (-0.3,0.75)$) {$A$};
\end{tiny}

\draw[->-=1] (G2A1) -- (a1n);
\draw[->-=1] (G2A2) -- (dn);
\draw[->-=1] (G2b) -- (bn);

\tikzstyle{every node}=[inner sep=1pt]
\node at (1.1,2.2) {$\Gamma_1$};
\begin{tiny}
\node (G1A1) at ($(a1) + (0.15,-0.85)$) {$A$};
\node (G1A2) at ($(d) + (0.65,0.4)$) {$A$};
\node (G1b) at ($(a2) + (-0.55,-0.3)$) {$b$};
\end{tiny}
\draw[->-=1] (G1A1) -- (a1n);
\draw[->-=1] (G1A2) -- (dn);
\draw[->-=1] (G1b) -- (a2n);

\end{scope}

\end{tikzpicture}

\caption{
Game division in Cases~\ref{case:ab_cycle_nochord} and~\ref{case:ab_cycle_chord}, respectively.
On the left: vertex $c$ is an $(A,b)$-cut in $\Gamma_1$.
}
\label{fig:ab_cycle}
\end{figure}

\Subsubcase{$a_1$ is not incident to a chord of $C$}\label{case:ab_cycle_nochord}
  Let $d$ be the vertex $\maxn(a_1,b)$.
  Assume that $d$ is different than $a_2$, as then the edge $\set{a_2,b}$ would be a chord of $C$ and we could apply Case~\ref{case:ab_chord}.
  Observe that $d$ is an internal vertex, as $a_1$ is not incident to a chord of $C$.

  Let $P$ be the inverted path $NP(a_1)(a_2,d]$.
  Let $Q$ be the inverted path $NP(b)(d,b^-]$.
  Let $G_1$ be $\inter{Q,P,C[a_2,b^-]}$.
  Let $G_2$ be $\inter{b,a_1,d,b}$.
  We divide the game into smaller games:
  \begin{itemize}
    \item $\Gamma_1=(G_1, \set{a_2},b^-)$,
    \item $\Gamma_2=(G_2, \set{a_1,d},b)$.
  \end{itemize}
  Vertices in $P$ give away a token to $a_1$.
  Vertices in $Q$ give away a token to $b$.
  Observe that vertices that are both in $C$ and $Q$ are $(\set{a_2},b^-)$-cuts in $G_1$.
  Vertex $d$ is marked with the token of value $2$ in $\Gamma_1$ if $b$ is marked in the same round.
  If it is the case, then $d$ gets at most one defect in $\Gamma_2$ (from $b$) and at most two defects in $\Gamma_1$.
  Otherwise, should vertex $d$ be colored in this round, it gets no defect in $\Gamma_2$.

\Subsubcase{$\set{a_1,d}$ is a chord of $C$}\label{case:ab_cycle_chord}
  Let $G_1$ be $\inter{C[a_1,d],a_1}$.
  Let $G_2$ be $\inter{C[d,a_1],d}$.
  We divide the game into smaller games:
  \begin{itemize}
    \item $\Gamma_1=(G_1, \set{a_1,d}, a_2)$,
    \item $\Gamma_2=(G_2, \set{a_1,d}, b)$.
  \end{itemize}
  Vertex $d$ gives away a token to $a_1$.
  Vertex $a_1$ gets no defect in $\Gamma_1$.
  Vertex $d$ gets at most one defect in $\Gamma_1$ and at most one defect in $\Gamma_2$.

\Case{A chord $\set{a_1,d}$ of $C$ separates $a_2$ from $b$}\label{case:chord}
  Observe, that the same game division as in Case~\ref{case:ab_cycle_chord} (see Figure~\ref{fig:ab_cycle}) works also in this case.

\begin{figure}[h]
\begin{tikzpicture}[scale=1, yscale=0.85]

\tikzset{->-/.style={decoration={
  markings,
  mark=at position #1 with {\arrow{latex}}},postaction={decorate}}}

\coordinate (a1) at (0,0) {};
\coordinate (a2) at (0.3,2) {};
\coordinate (p2) at (2.5,3.3) {};
\coordinate (p3) at (4.5,3.65) {};

\coordinate (up) at (5.5,3.7) {};

\coordinate (p_{m-2}) at (6.5,3.65) {};
\coordinate (p_{m-1}) at (8.5,3.3) {};
\coordinate (b) at (10.7,2) {};
\coordinate (b+) at (11,0) {};

\coordinate (q01) at (0.75,0) {};
\coordinate (q1) at (1.5,0) {};
\coordinate (q11) at (2.2,0) {};
\coordinate (q12) at (2.8,0) {};
\coordinate (q2) at (3.5,0) {};
\coordinate (q21) at (4.4,0) {};
\coordinate (q3) at (5.3,0) {};

\coordinate (q_{m-3}) at (6,0) {};
\coordinate (q_{m-3,1}) at (7,0) {};
\coordinate (q_{m-2}) at (8,0) {};
\coordinate (q_{m-2,1}) at (9,0) {};
\coordinate (q_{m-2,2}) at (10,0) {};

\coordinate (low) at (5.5,-2.0) {};

\filldraw[fill=gray!10, draw=none]
  (q1) -- (a2) to[out=50,in=200] (p2) -- cycle;

\filldraw[fill=gray!20, draw=none]
  (q2) -- (p2) to[out=15,in=187] (p3) -- cycle;

\filldraw[fill=gray!10, draw=none]
  (q_{m-2}) -- (p_{m-2}) to[out=-3,in=167] (p_{m-1}) -- cycle;

\filldraw[fill=gray!20, draw=none]
  (b+) -- (p_{m-1}) to[out=-20,in=130] (b) -- cycle;

\filldraw[fill=gray!35, draw=none]
  (b+) to[out=240, in=0] (low) to[out=180, in=-60] (a1) -- cycle;

\draw[thick] (a1) -- (a2);
\draw[thick, red] (a2) to[out=50,in=200] (p2);
\draw[thick, red] (p2) to[out=15,in=187] (p3);
\draw[dashed, red] (p3) to[out=2,in=180] (up);

\draw[dashed, red] (up) to[out=0,in=178] (p_{m-2});
\draw[thick,red] (p_{m-2}) to[out=-3,in=165] (p_{m-1});
\draw[thick,red] (p_{m-1}) to[out=-20,in=130] (b);
\draw[thick] (b+) -- (b);

\draw[thick, blue] (a1) -- (q3);
\draw[dashed,blue] (q3) -- (q_{m-3});
\draw[thick, blue] (q_{m-3}) -- (b+);

\draw[thick] (b+) to[out=240, in=0] (low);
\draw[thick] (low) to[out=180, in=-60] (a1);

\draw[->-=.5, thick] (q1) to (a2);
\draw[->-=.5] (q01) to (a2);

\draw[thick] (q1) to (p2);
\draw[->-=.5] (q11) to (p2);
\draw[->-=.5] (q12) to (p2);
\draw[->-=.5,thick] (q2) to (p2);

\draw[thick] (q2) to (p3);
\draw[->-=.5] (q21) to (p3);
\draw[->-=.5, thick] (q3) to (p3);

\draw[thick] (q_{m-3}) to (p_{m-2});
\draw[->-=.5] (q_{m-3,1}) to (p_{m-2});
\draw[->-=.5,thick] (q_{m-2}) to (p_{m-2});
\draw[->-=.5] (q_{m-2,1}) to (p_{m-1});
\draw[->-=.5] (q_{m-2,2}) to (p_{m-1});
\draw[thick] (q_{m-2}) to (p_{m-1});
\draw[->-=.5, thick] (b+) to (p_{m-1});

\begin{tiny}
\tikzstyle{every node}=[inner sep=0pt]
\node at ($(a1) + (-0.65,0)$) {$a_1 = q_0$};
\node at ($(a2) + (-0.65,0)$) {$a_2 = p_1$};
\node at ($(p2) + (0,0.3)$) {$p_2$};
\node at ($(p3) + (0,0.3)$) {$p_3$};
\node at ($(p_{m-2}) + (0,0.35)$) {$p_{m-2}$};
\node at ($(p_{m-1}) + (0,0.35)$) {$p_{m-1} = p_l$};
\node at ($(b) + (0.65,0)$) {$b = p_m$};
\node at ($(b+) + (0.8,0)$) {$\begin{array}{l}b^+ = q_m \\ = q_{m-1} \\ = c\end{array}$};
\node at ($(q1) + (0,-0.35)$) {$q_1$};
\node at ($(q2) + (0,-0.35)$) {$q_2$};
\node at ($(q3) + (-0.1,-0.35)$) {$q_3$};
\node at ($(q_{m-3}) + (0.15,-0.35)$) {$q_{m-3}$};
\node at ($(q_{m-2}) + (0,-0.35)$) {$q_{m-2}$};

\tikzstyle{every node}=[circle,minimum size=5pt,inner sep=0pt,draw,fill]
\node (a1n) at (a1) {};
\node (a2n) at (a2) {};
\node (p2n) at (p2) {};
\node (p3n) at (p3) {};

\node (p_{m-2}n) at (p_{m-2}) {};
\node (p_{m-1}n) at (p_{m-1}) {};
\node (bn) at (b) {};
\node (b+n) at (b+) {};

\node (q1n) at (q1) {};
\node (q2n) at (q2) {};
\node (q3n) at (q3) {};

\node (q_{m-3}n) at (q_{m-3})  {};
\node (q_{m-2}n) at (q_{m-2}) {};

\end{tiny}

\tikzstyle{every node}=[inner sep=1pt]
\node at (5.65,-1.2) {$\Gamma_1$};
\begin{tiny}
\node (G2A) at ($(a1n) + (0.55,-0.3)$) {$A$};
\node (G2b) at ($(b+n) + (-0.6, -0.3)$) {$b$};
\end{tiny}
\draw[->-=0.8] (G2A) -- (a1n);
\draw[->-=0.8] (G2b) -- (b+n);

\tikzstyle{every node}=[inner sep=1pt]
\node at (1.5,1.8) {$\Gamma_2$};
\begin{tiny}
\node (G3A1) at ($(a2n) + (0.65,-0.1)$) {$A$};
\node (G3A2) at ($(q1n) + (-0.05,0.7)$) {$A$};
\node (G3b) at ($(p2n) + (-0.45, -0.55)$) {$b$};
\end{tiny}
\draw[->-=1] (G3A1) -- (a2n);
\draw[->-=0.8] (G3A2) -- (q1n);
\draw[->-=1] (G3b) -- (p2n);

\tikzstyle{every node}=[inner sep=1pt]
\node at (3.5,2.1) {$\Gamma_3$};
\begin{tiny}
\node (G4A1) at ($(p2n) + (0.55,-0.3)$) {$A$};
\node (G4A2) at ($(q2n) + (0.0,0.8)$) {$A$};
\node (G4b) at ($(p3n) + (-0.45, -0.45)$) {$b$};
\end{tiny}
\draw[->-=1] (G4A1) -- (p2n);
\draw[->-=0.8] (G4A2) -- (q2n);
\draw[->-=1] (G4b) -- (p3n);

\tikzstyle{every node}=[inner sep=1pt]
\node at (7.75,2.15) {$\Gamma_{m-1}$};
\begin{tiny}
\node (GmA1) at ($(p_{m-2}n) + (0.55,-0.5)$) {$A$};
\node (GmA2) at ($(q_{m-2}n) + (-0.1,0.8)$) {$A$};
\node (Gmb) at ($(p_{m-1}n) + (-0.45, -0.45)$) {$b$};
\end{tiny}
\draw[->-=1] (GmA1) -- (p_{m-2}n);
\draw[->-=0.8] (GmA2) -- (q_{m-2}n);
\draw[->-=1] (Gmb) -- (p_{m-1}n);

\tikzstyle{every node}=[inner sep=1pt]
\node at (10.05,2.1) {$\Gamma_{m}$};
\begin{tiny}
\node (G_{m+1}A1) at ($(p_{m-1}n) + (0.65,-0.52)$) {$A$};
\node (G_{m+1}A2) at ($(b+n) + (-0.35,0.85)$) {$A$};
\node (G_{m+1}b) at ($(b) + (-0.08, -0.7)$) {$b$};
\end{tiny}
\draw[->-=0.8] (G_{m+1}A1) -- (p_{m-1}n);
\draw[->-=0.8] (G_{m+1}A2) -- (b+n);
\draw[->-=1] (G_{m+1}b) -- (bn);

\end{tikzpicture}
\caption{
Game division in Case~\ref{case:final}.
In this figure $q_{m-1} = q_{m}$, $l=m-1$, and $c=b^{+}$.
Path $P$ is depicted in red, path $Q$ is depicted in blue.
}
\label{fig:final_2}
\end{figure}
\begin{figure}[h]
\begin{tikzpicture}[scale=1, yscale=0.85]

\tikzset{->-/.style={decoration={
  markings,
  mark=at position #1 with {\arrow{latex}}},postaction={decorate}}}

\coordinate (a1) at (0,0) {};
\coordinate (a2) at (0.3,2) {};
\coordinate (p2) at (2.5,3.3) {};
\coordinate (p3) at (4.5,3.65) {};

\coordinate (up) at (5.5,3.7) {};

\coordinate (p_{m-2}) at (6.5,3.65) {};
\coordinate (p_{m-1}) at (8.5,3.3) {};
\coordinate (b) at (10.7,2) {};
\coordinate (b+) at (11,0) {};

\coordinate (q01) at (0.75,0) {};
\coordinate (q1) at (1.5,0) {};
\coordinate (q11) at (2.2,0) {};
\coordinate (q12) at (2.8,0) {};
\coordinate (q2) at (3.5,0) {};
\coordinate (q21) at (4.4,0) {};
\coordinate (q3) at (5.3,0) {};

\coordinate (q_{m-3}) at (6,0) {};
\coordinate (q_{m-3,1}) at (7,0) {};
\coordinate (q_{m-2}) at (8,0) {};
\coordinate (q_{m-2,1}) at (8.5,0) {};
\coordinate (q_{m-1}) at (9,0) {};
\coordinate (q_{m-1,1}) at (10,0) {};

\coordinate (low) at (5.5,-2.0) {};

\filldraw[fill=gray!10, draw=none]
  (q1) -- (a2) to[out=50,in=200] (p2) -- cycle;

\filldraw[fill=gray!20, draw=none]
  (q2) -- (p2) to[out=15,in=187] (p3) -- cycle;

\filldraw[fill=gray!10, draw=none]
  (q_{m-2}) -- (p_{m-2}) to[out=-3,in=167] (p_{m-1}) -- cycle;

\filldraw[fill=gray!20, draw=none]
  (q_{m-1}) -- (p_{m-1}) to[out=-20,in=130] (b) -- cycle;

\filldraw[fill=gray!35, draw=none]
  (b+) to[out=240, in=0] (low) to[out=180, in=-60] (a1) -- cycle;

\draw[thick] (a1) -- (a2);
\draw[thick, red] (a2) to[out=50,in=200] (p2);
\draw[thick, red] (p2) to[out=15,in=187] (p3);
\draw[dashed, red] (p3) to[out=2,in=180] (up);

\draw[dashed, red] (up) to[out=0,in=178] (p_{m-2});
\draw[thick,red] (p_{m-2}) to[out=-3,in=165] (p_{m-1});
\draw[thick, red] (p_{m-1}) to[out=-20,in=130] (b);
\draw[->-=.5, thick] (b+) -- (b);

\draw[thick, blue] (a1) -- (q3);
\draw[dashed, blue] (q3) -- (q_{m-3});
\draw[thick, blue] (q_{m-3}) -- (b+);

\draw[thick] (b+) to[out=240, in=0] (low);
\draw[thick] (low) to[out=180, in=-60] (a1);

\draw[->-=.5, thick] (q1) to (a2);
\draw[->-=.5] (q01) to (a2);

\draw[thick] (q1) to (p2);
\draw[->-=.5] (q11) to (p2);
\draw[->-=.5] (q12) to (p2);
\draw[->-=.5,thick] (q2) to (p2);

\draw[thick] (q2) to (p3);
\draw[->-=.5] (q21) to (p3);
\draw[->-=.5, thick] (q3) to (p3);

\draw[thick] (q_{m-3}) to (p_{m-2});
\draw[->-=.5] (q_{m-3,1}) to (p_{m-2});
\draw[->-=.5,thick] (q_{m-2}) to (p_{m-2});
\draw[thick] (q_{m-2}) to (p_{m-1});
\draw[->-=.5] (q_{m-2,1}) to (p_{m-1});
\draw[->-=.5,thick] (q_{m-1}) to (p_{m-1});
\draw[thick] (q_{m-1}) to (b);
\draw[->-=.5] (q_{m-1,1}) to (b);

\begin{tiny}
\tikzstyle{every node}=[inner sep=0pt]
\node at  ($(a1) + (-0.65,0)$) {$a_1 = q_0$};
\node at ($(a2) + (-0.65,0)$) {$a_2 = p_1$};
\node at ($(p2) + (0,0.3)$) {$p_2$};
\node at ($(p3) + (0,0.3)$) {$p_3$};
\node at ($(p_{m-2}) + (0,0.35)$) {$p_{m-2}$};
\node at ($(p_{m-1}) + (0,0.35)$) {$p_{m-1}$};
\node at ($(b) + (0.8,0)$) {$\begin{array}{l}b = p_m \\ = p_l\end{array}$};
\node at ($(b+) + (0.8,0)$) {$\begin{array}{l}b^+ = q_m \\ = c\end{array}$};
\node at ($(q1) + (0,-0.35)$) {$q_1$};
\node at ($(q2) + (0,-0.35)$) {$q_2$};
\node at ($(q3) + (-0.1,-0.35)$) {$q_3$};
\node at ($(q_{m-3}) + (0.15,-0.35)$) {$q_{m-3}$};
\node at ($(q_{m-2}) + (0,-0.35)$) {$q_{m-2}$};
\node at ($(q_{m-1}) + (0,-0.35)$) {$q_{m-1}$};

\tikzstyle{every node}=[circle,minimum size=5pt,inner sep=0pt,draw,fill]
\node (a1n) at (a1) {};
\node (a2n) at (a2) {};
\node (p2n) at (p2) {};
\node (p3n) at (p3) {};

\node (p_{m-2}n) at (p_{m-2}) {};
\node (p_{m-1}n) at (p_{m-1}) {};
\node (bn) at (b) {};
\node (b+n) at (b+) {};

\node (q1n) at (q1) {};
\node (q2n) at (q2) {};
\node (q3n) at (q3) {};

\node (q_{m-3}n) at (q_{m-3})  {};
\node (q_{m-2}n) at (q_{m-2}) {};
\node (q_{m-1}n) at (q_{m-1}) {};

\end{tiny}

\tikzstyle{every node}=[inner sep=1pt]
\node at (5.65,-1.2) {$\Gamma_1$};
\begin{tiny}
\node (G2A) at ($(a1n) + (0.55,-0.3)$) {$A$};
\node (G2b) at ($(b+n) + (-0.6, -0.3)$) {$b$};
\end{tiny}
\draw[->-=0.8] (G2A) -- (a1n);
\draw[->-=0.8] (G2b) -- (b+n);

\tikzstyle{every node}=[inner sep=1pt]
\node at (1.5,1.8) {$\Gamma_2$};
\begin{tiny}
\node (G3A1) at ($(a2n) + (0.65,-0.1)$) {$A$};
\node (G3A2) at ($(q1n) + (-0.05,0.7)$) {$A$};
\node (G3b) at ($(p2n) + (-0.45, -0.55)$) {$b$};
\end{tiny}
\draw[->-=1] (G3A1) -- (a2n);
\draw[->-=0.8] (G3A2) -- (q1n);
\draw[->-=1] (G3b) -- (p2n);

\tikzstyle{every node}=[inner sep=1pt]
\node at (3.5,2.1) {$\Gamma_3$};
\begin{tiny}
\node (G4A1) at ($(p2n) + (0.55,-0.3)$) {$A$};
\node (G4A2) at ($(q2n) + (0.0,0.8)$) {$A$};
\node (G4b) at ($(p3n) + (-0.45, -0.45)$) {$b$};
\end{tiny}
\draw[->-=1] (G4A1) -- (p2n);
\draw[->-=0.8] (G4A2) -- (q2n);
\draw[->-=1] (G4b) -- (p3n);

\tikzstyle{every node}=[inner sep=1pt]
\node at (7.75,2.15) {$\Gamma_{m-1}$};
\begin{tiny}
\node (GmA1) at ($(p_{m-2}n) + (0.55,-0.5)$) {$A$};
\node (GmA2) at ($(q_{m-2}n) + (-0.1,0.8)$) {$A$};
\node (Gmb) at ($(p_{m-1}n) + (-0.45, -0.45)$) {$b$};
\end{tiny}
\draw[->-=1] (GmA1) -- (p_{m-2}n);
\draw[->-=0.8] (GmA2) -- (q_{m-2}n);
\draw[->-=1] (Gmb) -- (p_{m-1}n);

\tikzstyle{every node}=[inner sep=1pt]
\node at (9.5,1.8) {$\Gamma_m$};
\begin{tiny}
\node (G_{m+1}A1) at ($(p_{m-1}n) + (0.5,-0.6)$) {$A$};
\node (G_{m+1}A2) at ($(q_{m-1}n) + (0.15,0.75)$) {$A$};
\node (G_{m+1}b) at ($(b) + (-0.5, 0.0)$) {$b$};
\end{tiny}
\draw[->-=1] (G_{m+1}A1) -- (p_{m-1}n);
\draw[->-=0.8] (G_{m+1}A2) -- (q_{m-1}n);
\draw[->-=1] (G_{m+1}b) -- (bn);

\end{tikzpicture}
\caption{
Game division in Case~\ref{case:final}.
In this figure $q_{m-1} \neq q_{m}$, $l=m$, and $c=b^{+}$.
Path $P$ is depicted in red, path $Q$ is depicted in blue.
}
\label{fig:final_3}
\end{figure}
\begin{figure}[h]
\begin{tikzpicture}[scale=1, yscale=0.85]

\tikzset{->-/.style={decoration={
  markings,
  mark=at position #1 with {\arrow{latex}}},postaction={decorate}}}

\coordinate (a1) at (0,0) {};
\coordinate (a2) at (0.3,2) {};
\coordinate (p2) at (2.5,3.3) {};
\coordinate (p3) at (4.5,3.65) {};
\coordinate (up) at (5.5,3.7) {};
\coordinate (p4) at (6.5,3.65) {};
\coordinate (b) at (10.7,2) {};
\coordinate (b+) at (11,0) {};

\coordinate (q01) at (0.75,0) {};
\coordinate (q1) at (1.5,0) {};
\coordinate (q11) at (2.2,0) {};
\coordinate (q12) at (2.8,0) {};
\coordinate (q2) at (3.5,0) {};
\coordinate (q21) at (4.4,0) {};
\coordinate (q3) at (5.3,0) {};

\coordinate (q31) at (6.05,-0.4) {};

\coordinate (c') at (6.8,-1.975);
\coordinate (c) at (8.5,-1.75);
\coordinate (c'c) at (7.4,-0.8);

\coordinate (low) at (5.5,-2.0) {};

\filldraw[fill=gray!25, draw=none]
  (q1) -- (a2) to[out=50,in=200] (p2) -- cycle;

\filldraw[fill=gray!10, draw=none]
  (q2) -- (p2) to[out=15,in=187] (p3) -- cycle;

\filldraw[fill=gray!25, draw=none]
  (q3) -- (p3) to[out=3,in=177] (p4) -- cycle;

\filldraw[fill=gray!10, draw=none]
  (c) -- (p4) to[out=-4,in=130] (b) -- (b+) to[out=240, in=10] cycle;

\filldraw[fill=gray!50, draw=none]
  (a1) -- (q3) to[out=-10, in=110] (c') to[out=180, in=0] (low) to[out=180, in=-60] cycle;
\filldraw[fill=gray!50, draw=none]
  (c') to[out=85, in=190] (c'c) to[out=-2, in=110] (c) to[out=190, in=4] cycle;

\draw[thick] (a1) -- (a2);
\draw[thick] (a2) to[out=50,in=200] (p2);
\draw[thick] (p2) to[out=15,in=187] (p3);
\draw[thick] (p3) to[out=3,in=177] (p4);
\draw[thick] (p4) to[out=-4,in=130] (b);
\draw[thick] (b) -- (b+);

\draw[thick] (a1) -- (q3);
\draw[thick] (q3) to[out=-10, in=110] (c');

\draw[thick] (b+) to[out=240, in=18] (c);
\draw[thick] (c) to[out=190, in=4] (c');
\draw[thick] (c') to[out=180, in=0] (low);
\draw[thick] (low) to[out=180, in=-60] (a1);

\draw[thick] (c') to[out=85, in=190] (c'c);
\draw[thick] (c'c) to[out=-2, in=110] (c);

\draw[->-=.6, thick] (q1) to (a2);
\draw[->-=.5] (q01) to (a2);

\draw[thick] (q1) to (p2);
\draw[->-=.5] (q11) to (p2);
\draw[->-=.5] (q12) to (p2);
\draw[->-=.55, thick] (q2) to (p2);

\draw[thick] (q2) to (p3);
\draw[->-=.5] (q21) to (p3);
\draw[->-=.5,thick] (q3) to (p3);
\draw[thick] (q3) to (p4);
\draw[->-=.40] (q31) to (p4);
\draw[->-=.555] (c') to (p4);
\draw[->-=.43] (c'c) to (p4);
\draw[->-=.55,thick] (c) to (p4);

\begin{tiny}
\tikzstyle{every node}=[inner sep=0pt]
\node at ($(a1) + (-0.65,0)$) {$a_1 = q_0$};
\node at ($(a2) + (-0.65,0)$) {$a_2 = p_1$};
\node at ($(p2) + (0,0.3)$) {$p_2$};
\node at ($(p3) + (0,0.3)$) {$p_3$};
\node at ($(p4) + (0,0.3)$) {$p_4 = p_l$};
\node at ($(b) + (0.3,0)$) {$b$};
\node at ($(b+) + (0.4,0)$) {$b^+$};
\node at ($(q1) + (0,-0.35)$) {$q_1$};
\node at ($(q2) + (0,-0.35)$) {$q_2$};
\node at ($(q3) + (-0.1,-0.35)$) {$q_3$};
\node at ($(c) + (0.0,-0.25)$) {$c$};
\node at ($(c') + (0.0,-0.3)$) {$c'$};
\end{tiny}

\begin{tiny}
\tikzstyle{every node}=[circle,minimum size=5pt,inner sep=0pt,draw,fill]
\node (a1n) at (a1) {};
\node (a2n) at (a2) {};
\node (p2n) at (p2) {};
\node (p3n) at (p3) {};
\node (p4n) at (p4) {};
\node (bn) at (b) {};
\node (b+n) at (b+) {};

\node (q1n) at (q1) {};
\node (q2n) at (q2) {};
\node (q3n) at (q3) {};
\node (cn) at (c) {};
\node (c'n) at (c') {};

\end{tiny}

\tikzstyle{every node}=[inner sep=1pt]
\node at (3.7,-1.1) {$\Gamma_1$};
\begin{tiny}
\node (G2A) at ($(a1n) + (0.55,-0.3)$) {$A$};
\node (G2b) at ($(c) + (-0.5, +0.25)$) {$b$};
\end{tiny}
\draw[->-=1] (G2A) -- (a1n);
\draw[->-=1] (G2b) -- (cn);

\tikzstyle{every node}=[inner sep=1pt]
\node at (1.5,1.8) {$\Gamma_2$};
\begin{tiny}
\node (G3A1) at ($(a2n) + (0.65,-0.1)$) {$A$};
\node (G3A2) at ($(q1n) + (-0.05,0.7)$) {$A$};
\node (G3b) at ($(p2n) + (-0.45, -0.55)$) {$b$};
\end{tiny}
\draw[->-=1] (G3A1) -- (a2n);
\draw[->-=0.8] (G3A2) -- (q1n);
\draw[->-=1] (G3b) -- (p2n);

\tikzstyle{every node}=[inner sep=1pt]
\node at (3.5,2.1) {$\Gamma_3$};
\begin{tiny}
\node (G4A1) at ($(p2n) + (0.55,-0.3)$) {$A$};
\node (G4A2) at ($(q2n) + (0.0,0.8)$) {$A$};
\node (G4b) at ($(p3n) + (-0.45, -0.45)$) {$b$};
\end{tiny}
\draw[->-=1] (G4A1) -- (p2n);
\draw[->-=0.8] (G4A2) -- (q2n);
\draw[->-=1] (G4b) -- (p3n);

\tikzstyle{every node}=[inner sep=1pt]
\node at (5.5,2.2) {$\Gamma_4$};
\begin{tiny}
\node (G5A1) at ($(p3n) + (0.55,-0.4)$) {$A$};
\node (G5A2) at ($(q3n) + (0.05,0.8)$) {$A$};
\node (G5b) at ($(p4n) + (-0.5, -0.4)$) {$b$};
\end{tiny}
\draw[->-=1] (G5A1) -- (p3n);
\draw[->-=0.8] (G5A2) -- (q3n);
\draw[->-=1] (G5b) -- (p4n);

\tikzstyle{every node}=[inner sep=1pt]
\node at (9.2,1) {$\Gamma_5$};
\begin{tiny}
\node (G6A1) at ($(p4n) + (0.5,-0.4)$) {$A$};
\node (G6A2) at ($(cn) + (0.4,0.5)$) {$A$};
\node (G6b) at ($(b) + (-0.5, -0.1)$) {$b$};
\end{tiny}
\draw[->-=1] (G6A1) -- (p4n);
\draw[->-=1] (G6A2) -- (cn);
\draw[->-=1] (G6b) -- (bn);

\end{tikzpicture}
\caption{
Game division in Case~\ref{case:final}.
In this figure $c \neq b^+$.
}
\label{fig:final_1}
\end{figure}

\Case{Final case}\label{case:final}
  Let $P$ denote the path $C[a_2,b]$.
  Let $Q$ denote the unique longest simple path from $a_1$ to $b^+$ in $G \grminus P$ that traverses only vertices adjacent to $P$ in $G$.

  Let $p_1=a_2$, and let $p_2, p_3,\ldots, p_{m-1}$ be the set of all interior vertices of path $P$ that have at least two neighbors in $Q$, and occur in this order in $P$, and let $p_m=b$.
  As $G$ is near-triangulated, for $i=1,\ldots, m-1$, vertices $p_i$ and $p_{i+1}$ have a unique common neighbor in $Q$.
  Let $q_0=a_1$, $q_m=b^+$, and for $i=1,\ldots, m-1$, let $q_i$ be the common neighbor of $p_i$ and $p_{i+1}$ in $Q$.
  Note that $q_1,\ldots,q_{m-1}$ are pairwise different.
  Moreover, if $q_0 = q_1$ then $\set{a_1,p_2}$ is a chord of $C$ that separates $a_2$ from $b$ and we can apply Case~\ref{case:chord}.
  However, it is possible that $q_{m-1}=q_m$.
  See Figure~\ref{fig:final_2} to see $q_{m-1}=q_m$, and Figure~\ref{fig:final_3} to see $q_{m-1} \neq q_m$.
  In Figures~\ref{fig:final_2},~\ref{fig:final_3} path $Q$ does not intersect $C(b^+,a_1)$.
  See Figure~\ref{fig:final_1} to see a non-empty intersection of $Q$ and $C(b^+,a_1)$.

  Observe, that for $i=1,\ldots,m-1$, vertex $q_i$ is not adjacent to any vertex $p_{j}$ other than $p_{i}$ and $p_{i+1}$.
  Indeed, an edge connecting $q_i$ with $p_j$ for $j < i$ would have to intersect with edges connecting $p_{i}$ with $Q$.
  Similarly, an edge connecting $q_i$ with $p_j$ for $j > i+1$ would have to intersect with edges connecting $p_{i+1}$ with $Q$.
  Each vertex in $Q(q_{i-1},q_i)$ is not adjacent to any vertex in $P$ other than $p_{i}$.
  Each vertex in $P(p_{i},p_{i+1})$ is not adjacent to any vertex in $Q$ other than $q_{i}$.
  Moreover, the definition of $Q$ guarantees that there are no vertices in $\inter{Q[q_i,q_{i+1}],p_{i+1}}$ other than $Q[q_i,q_{i+1}]$, and $p_{i+1}$.

  Let $p$ be the first (closest to $a_2$) vertex on path $P$ such that $p$ is adjacent to a vertex in $C[b^+,a_1)$.
  The vertex $p$ exists since $p_m=b$ is adjacent to $b^+$.
  Let $c$ be the first (closest to $b^+$) neighbor of $p$ on the path $C[b^+,a_1)$ and observe that $c$ is a vertex of $Q$.
  Let $l$ be the minimal $l$ such that $c = q_{l}$ or that $c$ is in $Q(q_l,q_{l+1})$ and observe that $p_l$ is the first (closest to $a_2$) vertex on path $P$ that is adjacent to $c$.
  Thus, we have $p=p_l$.

  If $p_l = p_1$ then $\set{a_2,c}$ is a chord of $C$ that separates $a_1$ from $b$ and we can apply Case~\ref{case:chord}.
  Thus, we can assume that $2 \leq l \leq m$.
  In case $l=m$ we have that $c=b^+$.
  See Figure~\ref{fig:final_3}.
  For $l=m-1$ and $q_{m-1}=q_m=b^+$, we also have $c=b^+$.
  See Figure~\ref{fig:final_2}.
  Otherwise, we have $c \neq b^+$.
  See Figure~\ref{fig:final_1}.
  Observe, that in any case we have that $Q(a_1,q_{l-1}]$ does not intersect $C$.
  On the other hand, $Q(q_{l+1},c)$ might intersect $C$ if $p_l$ has more than one neighbor in $C$.

  Let $G_1$ be $\inter{C[c,a_1], Q(a_1,c]}$.
  Let $G_i$, for $i=2,4,\ldots,l$, be $\inter{p_{i-1},p_{i},q_{i-1},p_{i-1}}$.
  Additionally, if $l<m$, let $G_{l+1}$ be $\inter{C[p_l,c],p_l}$.
  If $l=m$, then $G_{l+1}$ is not defined.
  Observe that any neighbor of $p_l$ in $C$ other than $c$ is an $(\set{a_1},c)$-cut in $G_1$.

  For each vertex $p_i$, for $i=2,\ldots,\min(l,m-1)$, we devalue the token function by removing the token of value $2$.
  This way we obtain that all vertices $p_i$ for $i=1,\ldots,l$ have one token.
  We divide the game into smaller games:
  \begin{itemize}
    \item $\Gamma_1=(G_1,\set{a_1},c)$,
    \item $\Gamma_i=(G_i,\set{p_{i-1},q_{i-1}},p_{i})$, for $i=2,4\ldots,l$,
    \item $\Gamma_{l+1}=(G_{l+1}, \set{p_l,c}, b)$ (if $l=m$, $\Gamma_{l+1}$ is not defined).
  \end{itemize}
  For $i=1,\ldots,l-1$, each vertex in $Q(q_{i-1},q_i]$ gives away a token to $p_i$.
  Each vertex in $Q(q_{l-1},c]$ gives away a token to $p_l$.
  For $i=1,\ldots,l-1$, vertex $q_i$ different than $c$ has one token of value $2$ and one token of value $3$ in the game $\Gamma_1$.
  We mark such a vertex with a token of value $2$ when $p_{i+1}$ is colored in the same round.
  Otherwise it is marked with a token of value $3$ in $\Gamma_1$.

  Vertex $a_1$ gets at most one defect in game $\Gamma_1$.
  Vertex $a_2$ gets at most one defect in game $\Gamma_2$.
  Vertex $b$ gets at most one defect in game $\Gamma_{l+1}$ when $l<m$, and at most one defect in game $\Gamma_{m}$ when $l=m$.
  For $i=2,\ldots,m-1$, vertex $p_i$ gets at most one defect in $\Gamma_{i}$, and at most one defect in $\Gamma_{i+1}$.
  The strategy that chooses the value of a token removed from vertex $q_i$ in $\Gamma_1$ guarantees that each vertex $q_i$ receives at most three defects in total.

\end{proof}

\section{Lister's strategy}\label{sec:lister}

In this section we show a planar graph which is not $2$-defective $3$-paintable.
We begin with a definition of a family of outerplanar graphs that play a crucial role in the construction.

An \emph{$l$-layered, $k$-petal daisy $D(l,k)$} is an outerplanar graph with the vertex set partitioned into $l$ layers,
$L_1,\ldots,L_{l}$, defined inductively as follows:
\begin{itemize}
  \item $1$-layered, $k$-petal daisy $D(1,k)$ is a single edge $\set{u,v}$, and $L_1 = \set{u,v}$.
  \item $l$-layered, $k$-petal daisy $D(l,k)$ for $l>1$ extends $D(l-1,k)$ in the following way:
  for every edge $\set{u,v}$ of $D(l-1,k)$ with $u,v \in L_{l-1}$
  we add a path $P(u,v)$ on $2k-1$ new vertices and join the first $k$ vertices of $P(u,v)$ to $u$ and join the last $k$ vertices of $P(u,v)$ to $v$.
  The \emph{inner} vertices of path $P(u,v)$ are all the vertices of $P(u,v)$ except the two end-points.
  We set $L_{l} = \bigcup \set{P(u,v): \set{u,v} \text{ is an edge with both endpoints in } L_{l-1}}$.
\end{itemize}
We draw $D(l,k)$ in an outerplanar way, i.e., such that all vertices are adjacent to the outerface. In particular, in such a drawing
all inner faces of $D(l,k)$ are triangles -- see Figure~\ref{fig:daisy} for an example.

\begin{figure}[h]
\centering
\begin{tikzpicture}[scale=0.5,yscale=0.7]

  \tikzstyle{every node}=[inner sep=2pt,fill=white]
  \draw[dashed] (-2,11)--(26,11);
  \draw[dashed] (-2,9)--(26,9);
  \node at (23.5,10) {$L_1$};

  \draw[dashed] (-2,6)--(26,6);
  \draw[dashed] (-2,4)--(26,4);
  \node at (23.5,5) {$L_2$};

  \draw[dashed] (-2,1)--(26,1);
  \draw[dashed] (-2,-1)--(26,-1);
  \node at (23.5,0) {$L_3$};

\begin{tiny}
\node at (-1.1,0) {$P(u,v)$};
\node at (0.6,4.6) {$u$};
\node at (4.4,4.6) {$v$};
\end{tiny}

  \tikzstyle{every node}=[circle,minimum size=5pt,inner sep=0pt,draw,fill]

  \node (l1-p0) at (6,10) {};
  \node (l1-p1) at (14,10) {};

  \node (l2-p0p1-p0) at (0,5) {};
  \node (l2-p0p1-p1) at (5,5) {};
  \node (l2-p0p1-p2) at (10,5) {};
  \node (l2-p0p1-p3) at (15,5) {};
  \node (l2-p0p1-p4) at (20,5) {};

  \node (l3-p0p1-p0p1-p0) at (0.5,0) {};
  \node (l3-p0p1-p0p1-p1) at (1.5,0) {};
  \node (l3-p0p1-p0p1-p2) at (2.5,0) {};
  \node (l3-p0p1-p0p1-p3) at (3.5,0) {};
  \node (l3-p0p1-p0p1-p4) at (4.5,0) {};

  \node (l3-p0p1-p1p2-p0) at (5.5,0) {};
  \node (l3-p0p1-p1p2-p1) at (6.5,0) {};
  \node (l3-p0p1-p1p2-p2) at (7.5,0) {};
  \node (l3-p0p1-p1p2-p3) at (8.5,0) {};
  \node (l3-p0p1-p1p2-p4) at (9.5,0) {};

  \node (l3-p0p1-p2p3-p0) at (10.5,0) {};
  \node (l3-p0p1-p2p3-p1) at (11.5,0) {};
  \node (l3-p0p1-p2p3-p2) at (12.5,0) {};
  \node (l3-p0p1-p2p3-p3) at (13.5,0) {};
  \node (l3-p0p1-p2p3-p4) at (14.5,0) {};

  \node (l3-p0p1-p3p4-p0) at (15.5,0) {};
  \node (l3-p0p1-p3p4-p1) at (16.5,0) {};
  \node (l3-p0p1-p3p4-p2) at (17.5,0) {};
  \node (l3-p0p1-p3p4-p3) at (18.5,0) {};
  \node (l3-p0p1-p3p4-p4) at (19.5,0) {};

  \path (l1-p0) edge[thick] (l1-p1);

  \path (l2-p0p1-p0) edge[thick] (l2-p0p1-p1);
  \path (l2-p0p1-p1) edge[thick] (l2-p0p1-p2);
  \path (l2-p0p1-p2) edge[thick] (l2-p0p1-p3);
  \path (l2-p0p1-p3) edge[thick] (l2-p0p1-p4);

  \path (l3-p0p1-p0p1-p0) edge[thick] (l3-p0p1-p0p1-p1);
  \path (l3-p0p1-p0p1-p1) edge[thick] (l3-p0p1-p0p1-p2);
  \path (l3-p0p1-p0p1-p2) edge[thick] (l3-p0p1-p0p1-p3);
  \path (l3-p0p1-p0p1-p3) edge[thick] (l3-p0p1-p0p1-p4);

  \path (l3-p0p1-p1p2-p0) edge[thick] (l3-p0p1-p1p2-p1);
  \path (l3-p0p1-p1p2-p1) edge[thick] (l3-p0p1-p1p2-p2);
  \path (l3-p0p1-p1p2-p2) edge[thick] (l3-p0p1-p1p2-p3);
  \path (l3-p0p1-p1p2-p3) edge[thick] (l3-p0p1-p1p2-p4);

  \path (l3-p0p1-p2p3-p0) edge[thick] (l3-p0p1-p2p3-p1);
  \path (l3-p0p1-p2p3-p1) edge[thick] (l3-p0p1-p2p3-p2);
  \path (l3-p0p1-p2p3-p2) edge[thick] (l3-p0p1-p2p3-p3);
  \path (l3-p0p1-p2p3-p3) edge[thick] (l3-p0p1-p2p3-p4);

  \path (l3-p0p1-p3p4-p0) edge[thick] (l3-p0p1-p3p4-p1);
  \path (l3-p0p1-p3p4-p1) edge[thick] (l3-p0p1-p3p4-p2);
  \path (l3-p0p1-p3p4-p2) edge[thick] (l3-p0p1-p3p4-p3);
  \path (l3-p0p1-p3p4-p3) edge[thick] (l3-p0p1-p3p4-p4);

  \path (l1-p0) edge[thick] (l2-p0p1-p0);
  \path (l1-p0) edge[thick] (l2-p0p1-p1);
  \path (l1-p0) edge[thick] (l2-p0p1-p2);
  \path (l1-p1) edge[thick] (l2-p0p1-p2);
  \path (l1-p1) edge[thick] (l2-p0p1-p3);
  \path (l1-p1) edge[thick] (l2-p0p1-p4);

  \path (l2-p0p1-p0) edge[thick] (l3-p0p1-p0p1-p0);
  \path (l2-p0p1-p0) edge[thick] (l3-p0p1-p0p1-p1);
  \path (l2-p0p1-p0) edge[thick] (l3-p0p1-p0p1-p2);
  \path (l2-p0p1-p1) edge[thick] (l3-p0p1-p0p1-p2);
  \path (l2-p0p1-p1) edge[thick] (l3-p0p1-p0p1-p3);
  \path (l2-p0p1-p1) edge[thick] (l3-p0p1-p0p1-p4);

  \path (l2-p0p1-p1) edge[thick] (l3-p0p1-p1p2-p0);
  \path (l2-p0p1-p1) edge[thick] (l3-p0p1-p1p2-p1);
  \path (l2-p0p1-p1) edge[thick] (l3-p0p1-p1p2-p2);
  \path (l2-p0p1-p2) edge[thick] (l3-p0p1-p1p2-p2);
  \path (l2-p0p1-p2) edge[thick] (l3-p0p1-p1p2-p3);
  \path (l2-p0p1-p2) edge[thick] (l3-p0p1-p1p2-p4);

  \path (l2-p0p1-p2) edge[thick] (l3-p0p1-p2p3-p0);
  \path (l2-p0p1-p2) edge[thick] (l3-p0p1-p2p3-p1);
  \path (l2-p0p1-p2) edge[thick] (l3-p0p1-p2p3-p2);
  \path (l2-p0p1-p3) edge[thick] (l3-p0p1-p2p3-p2);
  \path (l2-p0p1-p3) edge[thick] (l3-p0p1-p2p3-p3);
  \path (l2-p0p1-p3) edge[thick] (l3-p0p1-p2p3-p4);

  \path (l2-p0p1-p3) edge[thick] (l3-p0p1-p3p4-p0);
  \path (l2-p0p1-p3) edge[thick] (l3-p0p1-p3p4-p1);
  \path (l2-p0p1-p3) edge[thick] (l3-p0p1-p3p4-p2);
  \path (l2-p0p1-p4) edge[thick] (l3-p0p1-p3p4-p2);
  \path (l2-p0p1-p4) edge[thick] (l3-p0p1-p3p4-p3);
  \path (l2-p0p1-p4) edge[thick] (l3-p0p1-p3p4-p4);

  \draw[rounded corners=0.2cm] (0,0)--(0,0.5)--(5,0.5)--(5,-0.5)--(0,-0.5)--(0,0);

\end{tikzpicture}
\caption{A 3-layered 3-petal daisy $D(3,3)$.}
\label{fig:daisy}
\end{figure}
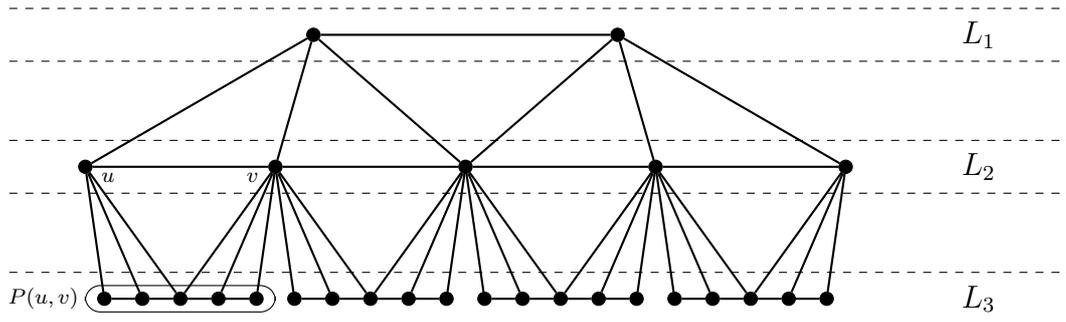

A planar graph $G$ is an \emph{edge extension of $D(l,k)$} if $G$ extends $D(l,k)$ in the following way: for every inner face $F$ of $D(l,k)$ we add a vertex $v(F)$ that is adjacent to some two vertices on the boundary of $F$.
A planar graph $G$ is the \emph{face extension of $D(l,k)$} if $G$ extends $D(l,k)$ in the following way: for every inner face $F$ of $D(l,k)$ we add a set $u(F)$ of four vertices such that one vertex in $u(F)$, say $u$, is adjacent to all vertices on the boundary of $F$, and for every edge $e$ on the boundary of $F$, one vertex in $u(F)$ is adjacent to $u$ and to the endpoints of $e$.
If $G$ is an edge/face extension of $D(l,k)$, the copy of $D(l,k)$ in $G$ is called \emph{the skeleton} of $G$ and is denoted $\skel(G)$.
Let $\ext(G)$ denote the vertices in $G$ that are not in $\skel(G)$.
See Figure~\ref{fig:extensions} for examples of an edge extension and of a face extension.

\begin{figure}[h]

\begin{tikzpicture}
\begin{scope}[xscale=0.35,yscale=0.35]

\tikzstyle{every node}=[inner sep=2pt,fill=white]
  \begin{tiny}
    \node at (6.5, 10.5) {$u$};
    \node at (13.5, 10.5) {$v$};
  \end{tiny}

  \tikzstyle{every node}=[circle,minimum size=5pt,inner sep=0pt,draw,fill]


  \node (u) at (7,10) {};
  \node (v) at (13,10) {};

  \node (p1) at (0,0) {};
  \node (p2) at (5,0) {};
  \node (p3) at (10,0) {};
  \node (p4) at (15,0) {};
  \node (p5) at (20,0) {};

  \node (f1) at (4.5,4) {};
  \node (f2) at (7.25,4) {};
  \node (f3) at (10,6) {};
  \node (f4) at (12.25,4) {};
  \node (f5) at (15.5,4) {};

  \draw[thick] (u) edge[thick] (v);

  \path (p1) edge[thick] (p2);
  \path (p2) edge[thick] (p3);
  \path (p3) edge[thick] (p4);
  \path (p4) edge[thick] (p5);

  \path (u) edge[thick] (p1);
  \path (u) edge[thick] (p2);
  \path (u) edge[thick] (p3);
  \path (v) edge[thick] (p3);
  \path (v) edge[thick] (p4);
  \path (v) edge[thick] (p5);

  \path (p1) edge[thick] (f1);
  \path (u) edge[thick] (f1);

  \path (p2) edge[thick] (f2);
  \path (p3) edge[thick] (f2);

  \path (u) edge[thick] (f3);
  \path (v) edge[thick] (f3);

  \path (v) edge[thick] (f4);
  \path (p4) edge[thick] (f4);

  \path (p4) edge[thick] (f5);
  \path (p5) edge[thick] (f5);
\end{scope}

\begin{scope}[xscale=0.45,yscale=0.3, shift={(15,0)}]
  \tikzstyle{every node}=[inner sep=2pt,fill=white]
  \begin{tiny}
    \node at (3.45,6.6) {$u(F)$};

  \end{tiny}

  \tikzstyle{every node}=[circle,minimum size=5pt,inner sep=0pt,draw,fill]
  \node (f1) at (0,10) {};
  \node (f2) at (10,10) {};
  \node (f3) at (5,0) {};

  \node (u123) at (5,6.5) {};
  \node (u12) at (5,8.5) {};
  \node (u23) at (6.25,5) {};
  \node (u13) at (3.75,5) {};

%




  \draw[dashed, rounded corners=0.2cm] (5,9) -- (5.4,9) --(5.4,6.75)--(6.75,5.2) -- (6.25,4.25) -- (5,5.7) -- (3.75,4.25) -- (3.25,5.2) -- (4.6,6.75) -- (4.6,9)--(5,9);

  \tikzstyle{every node}=[inner sep=2pt,fill=white]

  \path (f1) edge[thick] (f2);
  \path (f2) edge[thick] (f3);
  \path (f3) edge[thick] (f1);

  \path (f1) edge[thick] (u123);
  \path (f2) edge[thick] (u123);
  \path (f3) edge[thick] (u123);

  \path (f1) edge[thick] (u12);
  \path (f2) edge[thick] (u12);

  \path (f2) edge[thick] (u23);
  \path (f3) edge[thick] (u23);

  \path (f1) edge[thick] (u13);
  \path (f3) edge[thick] (u13);

  \path (u123) edge[thick] (u12);
  \path (u123) edge[thick] (u13);
  \path (u123) edge[thick] (u23);

\end{scope}
\end{tikzpicture}

\caption{On the left: an edge extension of $D(2,3)$. On the right: the face extension of $D(2,1)$ (which has only one inner face $F$).}
\label{fig:extensions}
\end{figure}
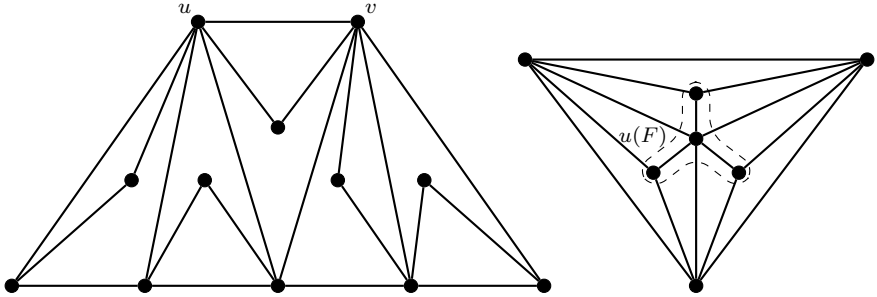

\begin{lemma}\label{lem:outerplanar-not-2-defective-2-choosable}
  Any edge extension of $D(l,k)$ for $l = 4$ and $k = 362$ is not $2$-defective $2$-choosable.
\end{lemma}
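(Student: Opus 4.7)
The plan is to produce an explicit $2$-list assignment $L$ on $V(G)$ for an arbitrary edge extension $G$ of $D(4, 362)$ such that no $2$-defective $L$-coloring exists. The construction is recursive along the four layers $L_1, \ldots, L_4$ of $\skel(G)$. I would label the two vertices of $L_1$ as $u_0, v_0$ and assign $L(u_0) = L(v_0) = \{1,2\}$, then design lists on the vertices of the higher layers (and on the face-extension vertices) so that each of the four possible colorings of $\{u_0, v_0\}$ from $\{1,2\}^2$ leads to a contradiction at some vertex further up.

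The central technical step is a \emph{petal lemma}: for a petal $P(u,v)$ together with the face-extension vertices on its $2k-1$ triangular faces, given fixed colors on $u$ and $v$, one can design $2$-lists on the petal vertices so that every $2$-defective coloring is forced to color some consecutive path-edge $\{w_i, w_{i+1}\}$ with a specific color pair from a small pre-designated set. Two mechanisms drive the forcing. First, each face-extension vertex $v(F)$ has degree exactly $2$ in $G$, adjacent to two of the three vertices of $F$; by setting $L(v(F))$ to the colors forced on those two neighbors, $v(F)$ is compelled to give a defect to one of them, consuming one unit of its $2$-defect budget. Second, the endpoint $u$ can absorb only $2$ same-color defects from its $k$ neighbors $w_1, \ldots, w_k$, and symmetrically $v$ only $2$ from $w_k, \ldots, w_{2k-1}$; with $k$ large, almost all inner path vertices on each half must avoid the corresponding endpoint color, and combined with the face-extension defect pressure this forces a consecutive path-edge to realize one of only a few specific bichromatic patterns.

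The petal lemma is then applied inductively, moving from $L_4$ down to $L_2$. The forced edge pattern within a layer-$l'$ petal becomes the base-edge coloring feeding the petal lemma for the petal it lies on at layer $l'-1$, propagating the contradiction downward until no coloring of $\{u_0, v_0\}$ from $\{1,2\}^2$ survives. The constant $k = 362$ is exactly what is needed for this propagation to go through three successive rounds of petal application: at each recursion step the set of permitted base-edge color patterns enlarges, and $k$ must exceed roughly twice this count, plus a small additive slack for the defect budgets of the skeleton endpoints.

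The main obstacle will be the careful bookkeeping throughout the induction. Each skeleton vertex has a total defect budget of $2$ and plays two roles: it is an endpoint of its own incident petal (which may already have consumed part of that budget via the petal lemma applied above) and simultaneously an inner path-vertex of the petal one layer below. One must track, for every skeleton vertex, both the color forced on it and the number of defects already absorbed from face-extension vertices and from its petals viewed from both sides, and design the $2$-lists so that they are consistent across overlapping petals while still exploiting the adversary's freedom in the choice of face-extension adjacencies. Keeping every $L(v)$ of size exactly $2$ throughout, rather than collapsing to singletons or growing to size $3$, is what drives the specific numerical value $k = 362$.
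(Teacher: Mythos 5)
Your plan---a ``petal lemma'' applied recursively through the four layers, starting from lists $\set{1,2}$ on $L_1$---is a genuinely different route from the paper's, but it has a gap that is not mere bookkeeping. The forcing mechanism you attribute to a face-extension vertex $v(F)$ assumes you can set $L(v(F))$ to ``the colors forced on those two neighbors'' and thereby guarantee a defect to one of them. In an \emph{edge} extension, however, which two of the three boundary vertices of $F$ the vertex $v(F)$ attaches to is the adversary's choice, built into the input graph; the troublesome case is when $v(F)$ attaches to the petal endpoint (the hub $u$) and to one inner path vertex rather than to two consecutive path vertices. There the two neighbors of $v(F)$ carry colors from different stages of your cascade (say $\alpha$ on $u$ and $i$ on the path vertex), so a $2$-list on $v(F)$ can simply avoid any conflict and the defect pressure evaporates. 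This cannot be waved away: $\skel(G)=D(4,362)$ is outerplanar, hence $2$-defective $2$-choosable by the results cited in the introduction, so the extension vertices must do real work, and the only work they can do is exactly the mechanism the adversary can disable face by face. The paper spends half its proof on precisely this: it calls a skeleton vertex $x$ \emph{bad} when some extension vertex is adjacent to $x$ and to an inner vertex of a petal $P(x,y)$, shows that if some $z\in L_1\cup L_2$ has at least $15$ bad neighbors on one petal then $G$ is not even $2$-defective $2$-colorable (a direct argument, no cascade), and otherwise extracts $24$ consecutive \emph{good} neighbors of a good $x\in L_2$, where all relevant extension vertices are guaranteed to attach to consecutive path vertices and a single explicit list assignment (lists $\set{i,\alpha}$, $\set{i,\beta}$, $\set{i,i+1}$ on a fan plus its extension vertices and the common neighbors in $L_4$) finishes the job. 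Your proposal never confronts this dichotomy, and without it the petal lemma is false as stated.

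Two secondary points. First, even where the attachments are favorable, the petal lemma is only asserted, and its hypothesis ``given fixed colors on $u$ and $v$'' does not match list coloring: the lists are fixed before the adversary colors and must work simultaneously for every admissible coloring of $u,v$; since the set of realizable patterns branches at each layer while every list stays of size $2$, it is not shown that the cascade closes after three rounds. Second, your account of $k=362$ (``roughly twice the count of base-edge patterns'') does not match where the constant actually comes from: in the paper it is the pigeonhole needed to find $24$ consecutive good neighbors after discarding at most $14$ bad ones. Indeed the paper's argument is local---one good vertex of $L_2$, one petal into $L_3$, its common neighbors in $L_4$, and the attached extension vertices---so no layer-by-layer propagation is needed at all.
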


Before the proof of Lemma~\ref{lem:outerplanar-not-2-defective-2-choosable} we show how to use it to construct a planar graph that is not $2$-defective $3$-paintable.

\begin{proof}[Proof of Theorem~\ref{thm:negative}]
Fix $l=4$ and $k=362$.
Let $G_1,\ldots,G_9$ be nine copies of the face extension of $D(l,k)$.
Let $G$ be a planar graph that is formed of $G_1,\ldots,G_9$ and a vertex $v$ joined to every vertex in $\skel(G_1),\ldots,\skel(G_9)$.
We show a winning strategy for Lister in a $2$-defective $3$-painting game on $G$.

In the first four rounds Lister plays the following strategy:
\begin{itemize}
  \item in the $i$-th round, for $i=1,2,3$, Lister marks $v$, if it is still uncolored, and the vertices in $\skel(G_{3i-2})$, $\skel(G_{3i-1})$, and $\skel(G_{3i})$.
  \item in the $4$-th round, Lister marks all the vertices in $\ext(G_1),\ldots,\ext(G_9)$.
\end{itemize}

Clearly, Painter needs to color vertex $v$ in one of the first three rounds.
Say, he colors $v$ in the $i$-th round.
All vertices from the skeletons of $G_{3i-2}$, $G_{3i-1}$, and $G_{3i}$ are adjacent to $v$ and at most two of them are colored in the $i$-th round.
Let $H$ be a graph, one among $G_{3i-2}$, $G_{3i-1}$, $G_{3i}$, such that no vertex of $H$ is colored in the $i$-th round.
Observe that after three rounds, all vertices from $skel(H)$ are uncolored and have only two tokens left.

In the fourth round, for any inner face $F$ of $skel(H)$, Painter colors at most three vertices in $u(F)$.
Thus, for every inner face $F$ of $skel(H)$, at least one vertex in $u(F)$ is still uncolored and has only two tokens left.

Let $H'$ be the graph induced by the set of uncolored vertices in $H$ after the $4$-th round.
Clearly, $H'$ is a supergraph of some edge extension of $D(l,k)$ and each vertex in $H'$ has two tokens left.
The state of the game on $H'$ is the same as the initial state of a 2-defective 2-painting game on $H'$.

By Lemma~\ref{lem:outerplanar-not-2-defective-2-choosable}, graph $H'$ is not $2$-defective $2$-choosable and hence Lister has a winning strategy in the 2-defective 2-painting game on $H'$.
\end{proof}

\begin{proof}[Proof of Lemma~\ref{lem:outerplanar-not-2-defective-2-choosable}]
Fix $l=4$ and $k=362$.
  Let $G$ be an edge extension of $D(l,k)$. For notational convenience, let $D(l,k)$ denote the skeleton of $G$, and use the notation introduced in the definition of $D(l,k)$.

We split all vertices of the first three layers of $D(l,k)$ into two categories.
A vertex $x \in L_i$ for $i < l$ is \emph{bad} if there exist: a vertex $y \in L_i$ adjacent to $x$; an inner vertex $z$ of the path $P(x,y)$; and a vertex in $\ext(G)$ that is adjacent both to $x$ and $z$.
Otherwise, $x$ is \emph{good}.
For example, in Figure~\ref{fig:extensions}, vertex $u$ is good, while vertex $v$ is bad.

Let $z$ be a vertex in $L_i$, $i \in \sbrac{2}$.
Note that the neighborhood of $z$ in $L_{i+1}$ induces one or two paths of size $k$ in $G$: we denote them $P_1(z)$, and $P_2(z)$.
If neighborhood of $z$ in $L_{i+1}$ induces only one path then $P_2(z)$ is undefined.

We claim that if some vertex $z$ in $L_1, L_2$ has at least $15$ bad neighbors in $P_j(z)$ for some $j \in \sbrac{2}$ then $G$ is not $2$-defective $2$-colorable.
Suppose to the contrary that $z$ has $15$ bad neighbors in $P_j(z)$ for some $j \in \sbrac{2}$ and that there is a $2$-defective coloring of $G$ with colors $\alpha$ and $\beta$.
Without loss of generality, $z$ is colored $\alpha$.
Among the neighbors of $z$ in $P_j(z)$ at most two are colored $\alpha$.
Vertices colored $\alpha$ in $P_j(z)$ split $P_j(z)$ into at most three subpaths that consist only of vertices colored $\beta$.
As there are at least $15$ bad vertices in $P_j(z)$ there is a subpath $P$ of $P_j(z)$ that consists of $5$ vertices colored $\beta$ such that the middle vertex of $P$ is bad.
Let $x$ be the the middle vertex of $P$ and $y, y'$ be the two neighbors of $x$ in $P$.
As each of the vertices $y,x,y'$ has two neighbors colored $\beta$ in $P$, all vertices in $P(x,y)$, and all vertices in $P(x,y')$ are colored $\alpha$.
Since $x$ is bad, there is a vertex $w$ in $\ext(G)$ that is adjacent to $x$ and to some inner vertex $t$ in $P(x,y)$ or in $P(x,y')$.
If $w$ is colored $\alpha$ then $t$ has three neighbors colored $\alpha$.
If $w$ is colored $\beta$ then $x$ has three neighbors colored $\beta$.
So, the considered coloring is not $2$-defective, a contradiction.

For the rest of the proof we assume that every vertex $x$ in layers $L_1,L_2$ of $D(l,k)$ has at most $14$ bad neighbors in $P_j(x)$, $j \in \sbrac{2}$.
Let $x$ be a good vertex in $L_2$ (such a vertex $x$ exists as $k > 14$) and let $y$ be any neighbor of $x$ in $L_2$.
Let $W$ be a path of $24$ good neighbors of $x$ that are inner vertices of $P(x,y)$.
Such a path $W$ exists as bad neighbors of $x$ split $P(x,y)$ into at most $15$ subpaths of good vertices.
For $k=362$, one of those subpaths has at least $24$ vertices.
We number the consecutive elements of $W$ by $w_1,\ldots,w_{24}$ according to the order they appear on the path $P(x,y)$.
Now, for $i \in \sbrac{23}$, we denote the following vertices:
\begin{itemize}
  \item $c_{i}$ -- a common neighbor of $w_{i}$ and $w_{i+1}$ in the path $P(w_i,w_{i+1})$,
  \item $a_{i}$ -- a vertex $v(F)$ of the face $F$ with boundary $x,w_{i},w_{i+1}$,
  \item $b_{i}$ -- a vertex $v(F)$ of the face $F$ with boundary $c_{i},w_i,w_{i+1}$.
\end{itemize}
Note that $a_{i}$ is adjacent to $w_i$ and $w_{i+1}$ as $x$ is good, and $b_{i}$ is also adjacent to $w_i$ and $w_{i+1}$ as $w_i$ is good.
We claim that the graph $G'$ induced by the vertex set
  $$\set{x} \cup W \cup \set{a_{i}, b_{i}, c_{i} : i \in \sbrac{23}}$$
is not $2$-defective $2$-choosable, which completes the proof of the lemma.

Consider the following $2$-list assignment $L$ of $G'$ with colors $\set{\alpha, \beta, 1, \ldots, 24}$:
\begin{itemize}
  \item $L(x) = \set{\alpha, \beta}$,
  \item $L(w_i) = \set{i, \alpha}$ for $i \in \set{1,\ldots,12}$,
  \item $L(w_i) = \set{i, \beta}$ for $i \in \set{13,\ldots,24}$,
  \item $L(a_{i}) = L(b_{i}) = L(c_{i}) = \set{i,i+1}$.
\end{itemize}

Now, suppose that $c$ is a $2$-defective $L$-coloring of $G'$.
Without loss of generality we assume that $c(x) = \alpha$.
It follows that among the vertices $w_1,\ldots,w_{12}$ at most $2$ are colored $\alpha$.
Thus, there are four consecutive vertices in $w_1,\ldots,w_{12}$, say $w_j,w_{j+1},w_{j+2},w_{j+3}$ for some $j \in \sbrac{9}$, that are not colored $\alpha$.
We have $c(w_l)=l$ for $l \in \set{j,\ldots,j+3}$.
Since $c(w_{j}) = j$, at most two vertices in the set $\set{ a_{j}, b_{j}, c_{j}}$ are colored $j$.
Thus, at least one vertex in this set is colored $j+1$.
Since $c(w_{j+1}) = j+1$, at most one vertex in $\set{ a_{j+1}, b_{j+1}, c_{j+1}}$ is colored $j+1$.
Thus, at least two vertices in this set are colored $j+2$.
Eventually, since $c(w_{j+2})=j+2$, all vertices in the set $\set{ a_{j+2}, b_{j+2}, c_{j+2}}$ are colored $j+3$.
However, $w_{j+3}$ is also colored $j+3$ and $c$ is not a $2$-defective $L$-coloring.
\end{proof}

\bibliographystyle{plain}
\bibliography{paper}

\end{document}